\setlist[enumerate,1]{label=(\alph*)}
\newcommand{\aut}{\mathrm{Aut}}
\newcommand{\syl}{\mathrm{Syl}}
\newcommand{\Syl}{\mathrm{Syl}}
\newcommand{\irr}{\mathrm{Irr}}
\newcommand{\tw}[1]{{}^#1}
\title{Principal $2$-Blocks and Sylow $2$-Subgroups}
\author{A. A.\ Schaeffer Fry and Jay Taylor}
\keywords{McKay Conjecture, principal 2-block, odd-degree characters}
\begin{document}
\begin{abstract}
Let $G$ be a finite group with Sylow $2$-subgroup $P \leqslant G$. Navarro--Tiep--Vallejo have conjectured that the principal $2$-block of $N_G(P)$ contains exactly one irreducible Brauer character if and only if all odd-degree ordinary irreducible characters in the principal $2$-block of $G$ are fixed by a certain Galois automorphism $\sigma \in \Gal(\mathbb{Q}_{|G|}/\mathbb{Q})$. Recent work of Navarro--Vallejo has reduced this conjecture to a problem about finite simple groups. We show that their conjecture holds for all finite simple groups, thus establishing the conjecture for all finite groups.
\end{abstract}

\section{Introduction}
\begin{pa}
Let $G$ be a finite group, $\ell>0$ a prime, and let $\Irr_{\ell'}(G) \subseteq \Irr(G)$ be the ordinary irreducible characters of $G$ whose degrees are coprime to $\ell$. The McKay conjecture proposes that there is a bijection between the sets $\Irr_{\ell'}(G)$ and $\Irr_{\ell'}(N_G(P))$ where $N_G(P)$ is the normaliser of a Sylow $\ell$-subgroup $P \in \Syl_{\ell}(G)$. In \cite{navarro:2004:the-mckay-conjecture-and-galois} Navarro proposed a striking generalisation of this conjecture, which we refer to as the Galois--McKay conjecture. This conjecture states that there exists a bijection $\Irr_{\ell'}(G) \to \Irr_{\ell'}(N_G(P))$ which is compatible with the action of certain Galois automorphisms.
\end{pa}

\begin{pa}
Currently very little is known about the validity of the Galois-McKay conjecture. However, there is a known consequence of this conjecture which is much more tractable than the conjecture itself, see \cite[5.2]{navarro:2004:the-mckay-conjecture-and-galois}. From now until the end of this article we denote by $\sigma \in \Gal(\mathbb{Q}^{\mathrm{ab}}/\mathbb{Q})$ the unique element of the Galois group of the maximal abelian extension $\mathbb{Q} \subseteq \mathbb{Q}^{\mathrm{ab}}$ that fixes $2$-roots of unity and squares odd roots of unity.
\end{pa}

\begin{conj}[Navarro]\label{conj:SNS2}
Assume $G$ is a finite group and $P \in \Syl_2(G)$. We have $N_G(P) = P$ if and only if all odd-degree irreducible characters of $G$ are $\sigma$-fixed.
\end{conj}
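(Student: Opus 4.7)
The plan is to approach the conjecture by first reducing to the case of finite simple groups and then verifying it family by family. A useful preliminary observation is that the Galois condition admits a conjugacy-theoretic interpretation: for $\chi\in\Irr(G)$ and $g=g_2g_{2'}$ the $2$-part/$2'$-part decomposition, one has $\chi^\sigma(g)=\chi(g_2g_{2'}^2)$. Thus every odd-degree irreducible character of $G$ is $\sigma$-fixed if and only if the map $g\mapsto g_2g_{2'}^2$ induces the identity on the span of the odd-degree rows of the character table. Rewriting the conjecture in this form makes both directions amenable to Clifford theory.

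For the reduction step, any minimal counterexample $G$ should be shown to be almost simple or quasisimple, arguing separately for the two implications. If $N\trianglelefteq G$ is a proper non-trivial normal subgroup, odd-degree characters of $G$ restrict to sums of $G$-conjugate odd-degree characters of $N$, and $\sigma$-invariance can be tracked through the Clifford correspondence with careful attention to central extensions and character-triple isomorphisms. For the forward direction, the hypothesis $N_G(P)=P$ additionally imposes strong $2$-local conditions via Brauer's Main Theorems on the principal $2$-block; these should suffice to cut the analysis down to simple sections.

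Granted the reduction, verification proceeds case by case across the classification. Alternating groups are handled combinatorially: odd-degree characters are parameterised by $2$-core partitions, and the $\sigma$-action is computed via the Murnaghan--Nakayama rule, with the condition $N_{A_n}(P)=P$ translating into explicit arithmetic restrictions on $n$. Sporadic groups and their covers are dispatched by inspection of the \textsc{Atlas}. For groups of Lie type $G(q)$ in defining characteristic $2$, $P$ is the unipotent radical of a Borel, $N_G(P)=B$ has non-trivial torus quotient, and odd-degree characters fall into Harish-Chandra series whose $\sigma$-action is well understood; this case reduces quickly.

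The main obstacle will be groups of Lie type in odd characteristic, and in particular the classical groups $\mathrm{GL}_n(q)$, $\GU_n(q)$, $\mathrm{Sp}_{2n}(q)$, and the orthogonal groups. Here the Sylow $2$-subgroup is an iterated wreath product built from dihedral, semidihedral, or generalised quaternion factors, and the equality $N_G(P)=P$ translates into restrictive conditions linking $q$ to the $2$-adic expansion of the rank. On the character-theoretic side, odd-degree characters are parameterised via Lusztig's Jordan decomposition by pairs $(s,\psi)$ with $s$ a semisimple $2$-regular element of the dual group $\mathbf{G}^\star$ and $\psi$ a unipotent character of $C_{\mathbf{G}^\star}(s)^{F^\star}$ of odd degree; the $\sigma$-action must be controlled on both components, building on existing work on Galois actions on Lusztig series. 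Balancing these two descriptions against each other is, I expect, the technical heart of the proof, with the exceptional groups then reachable via the same framework together with a bounded list of low-rank computations.
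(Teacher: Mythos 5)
Note first that the paper records this statement as a conjecture and does not prove it from scratch: it observes that \cref{conj:SNS2} was already established by the first author's reduction to an inductive condition on simple groups (``SN2S-Goodness,'' \cite[Definition 1]{schaefferfry:2016:odd-degree-characters-and-self-normalizing-sylow-subgroups}) together with the case-by-case verification of that condition in \cite{schaefferfry:2016:odd-degree-characters-and-self-normalizing-sylow-subgroups,schaeffer-fry-taylor:2017:on-self-normalising-sylow-2-subgroups,schaeffer-fry:2017:galois-automorphisms-harish-chandra}; it also records an alternative derivation, namely that \cref{conj:SNS2} follows from the paper's main result \cref{conj:main} combined with \cite[Theorem C, 6.7]{navarro-vallejo:2017:2-Brauer-correspondent-blocks}. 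Your proposal takes the first, direct route rather than the block-theoretic deduction, and its outline is sound: the interpretation $\chi^{\sigma}(g)=\chi(g_2 g_{2'}^2)$ is correct, the Clifford-theoretic reduction is the right shape, and the case distinctions (alternating groups via $2$-cores and Murnaghan--Nakayama, sporadics via the \textsc{Atlas}, defining-characteristic-$2$ Lie type via Harish--Chandra theory, odd-characteristic Lie type via Jordan decomposition against $2$-local structure) are exactly how the cited works proceed.

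Two points need sharpening before this becomes a proof. First, the inductive target cannot simply be ``every finite simple group $S$ satisfies the conjecture'': the SN2S-Goodness condition compares $\sigma$-fixedness of odd-degree characters of $S$ with their invariance under $2$-elements of $\mathrm{Out}(S)$, precisely so that the behaviour of the almost simple groups $S\leqslant A\leqslant\Aut(S)$ with $A/S$ a $2$-group can be read off; a minimal-counterexample reduction that only produces a simple section will not close the induction, and your proposal does not articulate this stronger statement. Second, the technical heart you identify in the odd-characteristic Lie-type case is attacked differently in practice: rather than balancing the $\sigma$-action on both Jordan components $(s,\psi)$, one shows that $\sigma$ sends the Lusztig series $\mathcal{E}(\widetilde{G},[\tilde s])$ to $\mathcal{E}(\widetilde{G},[\tilde s\tilde z])$ for $\tilde z$ in the kernel of the dual regular embedding, and hence twists characters of $\widetilde{G}$ by a linear character $\theta_{\tilde z}$ trivial on $G$; consequently, an odd-degree character of $G$ lying in a $\sigma$-stable rational series is automatically $\sigma$-fixed whenever it extends to the regular embedding $\widetilde{G}$ (compare \cref{prop:extend-fixed}). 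Verifying extendability (via Clifford theory and the multiplicity-free structure of $\mathrm{Res}^{\widetilde G}_{G}$) is then the operative step, and is substantially more tractable than a direct computation of $\sigma$ on unipotent characters of centralisers.
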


\begin{pa}
The first author has reduced \cref{conj:SNS2} to showing that each finite simple group is SN2S-Good, in the sense of \cite[Definition 1]{schaefferfry:2016:odd-degree-characters-and-self-normalizing-sylow-subgroups}. Moreover, the combined efforts of \cite{schaefferfry:2016:odd-degree-characters-and-self-normalizing-sylow-subgroups,schaeffer-fry-taylor:2017:on-self-normalising-sylow-2-subgroups,schaeffer-fry:2017:galois-automorphisms-harish-chandra} complete the programme of showing each finite simple group is SN2S-good, thus establishing \cref{conj:SNS2}. Recently Navarro--Tiep--Vallejo \cite{navarro-tiep-vallejo:2016:local-blocks-with-one-simple-module} have considered an analogue of \cref{conj:SNS2} which involves the principal $\ell$-block. They show their analogue holds when $\ell$ is odd but, again, the $\ell=2$ case seems to be harder. Their conjecture in the case $\ell=2$, which is the focus of this article, is as follows.
\end{pa}

\begin{conj}[Navarro--Tiep--Vallejo]\label{conj:main}
Assume $G$ is a finite group and $P \in \Syl_2(G)$. The principal $2$-block of $N_G(P)$ contains only one irreducible Brauer character if and only if every odd-degree character in the principal $2$-block of $G$ is $\sigma$-fixed.
\end{conj}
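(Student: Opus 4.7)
The plan is to mirror the strategy that established \cref{conj:SNS2}: invoke the Navarro--Vallejo reduction to pass to an equivalent problem about finite simple groups $S$ (or, more accurately, their universal covers), and then verify that problem family by family. The first step is to make the simple-group criterion fully explicit---a principal-$2$-block analogue of SN2S-goodness, comparing the $\sigma$-fixed odd-degree ordinary characters in $B_0(\wt S)$ with a companion set governed by the $2$-local structure of $\wt S$, subject to enough $\aut(S)$-equivariance that the reduction runs through both the central-extension and the almost-simple step.

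With that criterion in hand, the sporadic groups (including the Tits group) are handled by direct computation from the known character tables and block decompositions, and the alternating groups by the standard combinatorial description of $B_0(\mathfrak{S}_n)$ via partitions with empty $2$-core, together with a branching analysis for $\mathfrak{A}_n$ and its double cover. The finite groups of Lie type in defining characteristic $2$ should likewise be broadly accessible: the Sylow $2$-subgroup is the unipotent radical of a Borel subgroup $B$, so $N_G(P)=B$; the odd-degree characters of $B_0(G)$ are governed by Harish-Chandra theory, and the $\sigma$-action can be tracked as in \cite{schaeffer-fry:2017:galois-automorphisms-harish-chandra}, with the SN2S statement replaced throughout by its $B_0$-variant.

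The main obstacle, exactly as in the SN2S programme, is the case of groups of Lie type in odd characteristic. Here the principal $2$-block is large, and one has to control simultaneously three things: (i) the decomposition of $B_0$ via $2$-Jordan decomposition (Bonnaf\'e--Dat--Rouquier and Cabanes--Enguehard), (ii) the restriction to odd-degree characters via $d$-Harish-Chandra theory at $d\in\{1,2\}$, and (iii) the action of $\sigma$ on the resulting Deligne--Lusztig characters. The expected route is to refine the methods of \cite{schaeffer-fry-taylor:2017:on-self-normalising-sylow-2-subgroups} so that every comparison is carried out inside $B_0$ rather than on the whole of $\Irr_{2'}(\wt S)$. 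The most delicate subcases should be the quasi-simple groups of type $A$, i.e.\ $\mathrm{SL}_n(q)$ and $\mathrm{SU}_n(q)$ for odd $q$, and of type $D$: in both, non-trivial diagonal automorphisms interact with $\sigma$ through the centre of the ambient algebraic group, and this interaction must be tracked alongside the $B_0$-restriction. Separating the characters that enter $B_0$ from those that do not, while simultaneously keeping control of the $\sigma$-action through the Jordan-decomposition correspondences, is where the bulk of the technical work is expected to lie.
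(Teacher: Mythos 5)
Your top-level strategy matches the paper's: invoke the Navarro--Vallejo reduction to almost simple groups with socle of $2$-power index, then leverage the SN2S-goodness programme and handle a residual list of families directly. Where your plan diverges, and where it would cost considerable wasted effort, is in the step you leave abstract, namely extracting the exact simple-group criterion. The paper's key observation is a clean translation: by Brauer's theorem, $B_0(N_A(Q))$ has a unique simple module if and only if $N_A(Q)$ has a normal $2$-complement, and the elementary \cref{lem:norm-2-comp-exist,lem:norm-2-comp-no-exist} push that condition between $N_S(P)$ and $N_A(Q)$. Combined with Kondratiev's classification of the simple groups whose Sylow $2$-normaliser lacks a normal $2$-complement, this gives \cref{prop:reducetoSN2Sgoodness}: SN2S-goodness already settles the conjecture for $A$ in all but a short explicit list of cases. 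Your plan, as written, would instead reprove alternating groups and most Lie-type families from scratch, and would have to rediscover this normal-$2$-complement reformulation along the way.

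Your enumeration of delicate subcases is also off. Type $D$ in odd characteristic is \emph{not} one of the residual cases---it is absorbed by the reduction above, so the effort you earmark for it is unnecessary. The genuine exceptions are: groups of Lie type in characteristic $2$ with nontrivial split maximal torus (where the problem localises to the Borel but the Sylow normaliser need not be self-normalising, forcing a careful analysis of which graph-field automorphisms are present); $\A^{\pm}_{n-1}(q)$ and $\E^{\pm}_6(q)$ for odd $q$ (where, as you correctly anticipate, $\sigma$ interacts with diagonal automorphisms through the centre of the ambient group, controlled here via the Digne--Michel uniformity criterion packaged in \cref{prop:extend-fixed}); ${}^2\G_2(q)$; and the sporadics $\mathsf{J}_1$, $\mathsf{J}_2$, $\mathsf{J}_3$, $\mathsf{Suz}$, $\mathsf{HN}$. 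The Tits group and the alternating groups require no new argument.
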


\begin{pa}
In this form \cref{conj:main} and \cref{conj:SNS2} do not appear to be related. However, it is a result of Brauer that the principal $\ell$-block of a finite group has only one irreducible Brauer character if and only if the group has a normal $\ell$-complement, see \cite[Corollary 6.13]{navarro:1998:characters-and-blocks}. In fact, it is shown in \cite[6.7]{navarro-vallejo:2017:2-Brauer-correspondent-blocks} that \cref{conj:SNS2} is a consequence of \cref{conj:main} and \cite[Theorem C]{navarro-vallejo:2017:2-Brauer-correspondent-blocks}. In \cite[Theorem B]{navarro-vallejo:2017:2-Brauer-correspondent-blocks} Navarro--Vallejo have shown that to establish \cref{conj:main} for all finite simple groups it is enough to establish the conjecture when $G$ is an almost simple group whose socle is a finite nonabelian simple group of $2$-power index. Using this approach we are able to establish the validity of \cref{conj:main}.
\end{pa}

\begin{thm}
If $G$ is an almost simple group whose socle is nonabelian and has $2$-power index, then \cref{conj:main} holds for $G$. In particular, \cref{conj:main} holds for all finite groups.
\end{thm}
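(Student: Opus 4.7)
Thanks to \cite[Theorem B]{navarro-vallejo:2017:2-Brauer-correspondent-blocks}, the task reduces to verifying \cref{conj:main} for almost simple groups $G$ whose socle $S$ is a nonabelian finite simple group with $[G:S]$ a power of $2$. The plan is to proceed through the classification of finite simple groups and verify the biconditional case by case for each $S$, reusing and extending the machinery developed for the SN2S programme.

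As a first step I would invoke Brauer's theorem \cite[Corollary 6.13]{navarro:1998:characters-and-blocks} to replace the left-hand side of \cref{conj:main} by the concrete condition that $N_G(P)$ has a normal $2$-complement. Since $[G:S]$ is a $2$-power, a Sylow $2$-subgroup $P$ of $G$ contains a Sylow $2$-subgroup of $S$, so the analysis of $N_G(P)$ should largely refine the work on $S$ carried out in \cite{schaefferfry:2016:odd-degree-characters-and-self-normalizing-sylow-subgroups, schaeffer-fry-taylor:2017:on-self-normalising-sylow-2-subgroups, schaeffer-fry:2017:galois-automorphisms-harish-chandra}; the new content is tracking principal $2$-block membership on the character-theoretic side.

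I would then split into several regimes. For sporadic simple groups and the Tits group only finitely many almost simple extensions of $2$-power index arise, and in each the biconditional is verified by direct inspection of the \textsc{Atlas} and \textsc{GAP} character tables, reading off principal $2$-block membership, odd degree and $\sigma$-fixedness at once. For alternating groups, where $\mathrm{Out}(A_n) = C_2$ except for $n=6$, the partition description of $\Irr_{2'}(S_n)$, the Nakayama Conjecture characterisation of $2$-blocks via $2$-cores, and the Murnaghan--Nakayama rule make the analysis essentially explicit. For groups of Lie type in defining characteristic $2$, the Sylow $2$-subgroup $P$ is the unipotent radical of a Borel $B$, so $N_G(P) \cap S = B$ is solvable, and both sides of the biconditional can be read off from Harish-Chandra theory.

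The main obstacle is the case of groups of Lie type in odd characteristic. Here I would combine the Cabanes--Enguehard description of the principal $2$-block in terms of $e$-cuspidal pairs with $e = 1$, the parametrisation of odd-degree characters of $B_0(G)$ via ordinary Harish-Chandra series after Malle, and the explicit $\sigma$-action formulas developed in \cite{schaeffer-fry-taylor:2017:on-self-normalising-sylow-2-subgroups, schaeffer-fry:2017:galois-automorphisms-harish-chandra}, passing through Jordan decomposition to centralisers on the dual side. The hardest point will be simultaneously controlling the four constraints of lying in $B_0(G)$, having odd degree, being $\sigma$-fixed, and being compatible with the $2$-power outer factor $G/S$, most delicately for the exceptional and twisted types where diagonal and field automorphisms must be handled together.
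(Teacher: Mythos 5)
Your broad plan is correct at the level of tools (the Navarro--Vallejo reduction, Brauer's characterisation of one-Brauer-character principal blocks, the Cabanes--Enguehard description of $B_0$ for Lie type groups, and the $\sigma$-action machinery from the SN2S papers), but the route you sketch---a full sweep through the classification of finite simple groups, family by family---is genuinely different from and substantially heavier than what the paper actually does. The paper's central structural move, which your proposal does not isolate, is a reduction proposition (\cref{prop:reducetoSN2Sgoodness}) showing that if the socle $S$ is already known to be SN2S-Good, then the almost simple extension $A$ automatically satisfies \cref{conj:main}, \emph{except} for a short explicit list of problem families. This proposition is powered by two sets of lemmas you do not name: (i) elementary normaliser lemmas (\cref{lem:norm-2-comp-exist,lem:norm-2-comp-no-exist}) that transfer the existence or non-existence of a normal $2$-complement in $N_S(P)$ up to $N_A(Q)$; and (ii) a Clifford-theoretic lemma (\cref{lem:2powerrestriction}) showing that for a normal subgroup of $2$-power index, odd-degree characters restrict irreducibly and that membership in $B_0$ and $\sigma$-fixedness descend to the restriction. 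Together these mean that \emph{nothing new} needs to be done for alternating groups, most sporadics, and most Lie-type families---they are all dispatched in one stroke by the completed SN2S programme. Only ${}^2\G_2(q)$, five sporadics, characteristic-$2$ Lie type with nontrivial split torus, and types $\A_{n-1}^\pm(q)$, $\E_6^\pm(q)$ in odd characteristic require fresh arguments, and these are exactly the sections of the paper. Your sweep would ultimately land in the same place, but at the cost of redoing (for instance) the entire alternating-group analysis and the odd-characteristic classical-group analysis from scratch, which the paper explicitly avoids; moreover the vaguest part of your proposal ("the hardest point will be simultaneously controlling the four constraints\ldots") is precisely the part that the reduction proposition makes unnecessary outside the small exceptional list.
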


\begin{pa}\label{pa:exceptions}
Now let $S$ be a finite nonabelian simple group and $S \leqslant A \leqslant \Aut(S)$ an almost simple group with $A/S$ a $2$-group. As one might expect, checking that $A$ satisfies \cref{conj:main} is closely related to checking that $S$ is SN2S-good. Hence, we first consider when our previous work \cite{schaefferfry:2016:odd-degree-characters-and-self-normalizing-sylow-subgroups,schaeffer-fry-taylor:2017:on-self-normalising-sylow-2-subgroups,schaeffer-fry:2017:galois-automorphisms-harish-chandra} establishes that \cref{conj:main} holds for $A$. This turns out to be the case unless $S$ is one of the following groups: a group of Lie type defined in characteristic $2$, $\A_{n-1}^{\pm}(q)$, $\E_6^{\pm}(q)$, ${}^2\G_2(q)$, $\mathsf{J}_1$, $\mathsf{J}_2$, $\mathsf{J}_3$, $\mathsf{Suz}$, $\mathsf{HN}$. Therefore, these are the cases that we must consider here.
\end{pa}

\begin{pa}
The layout of this paper is as follows. In \Cref{sec:normalisers,sec:passing-almost-simple} we recall some general statements about normalisers of Sylow $2$-subgroups and characters of principal $2$-blocks. This allows us to establish when \cref{conj:main} is a consequence of being SN2S-Good, as mentioned in \cref{pa:exceptions}, see \cref{prop:reducetoSN2Sgoodness}. The sporadic groups mentioned in \cref{pa:exceptions} and ${}^2\G_2(q)$ are treated in \cref{sec:sporadics}. In \cref{sec:red-grp} we introduce finite reductive groups and give a criterion for an irreducible character of a finite reductive group to be $\sigma$-fixed. Using this and results of \cite{schaeffer-fry-taylor:2017:on-self-normalising-sylow-2-subgroups,navarro-tiep:2015:irreducible-representations-of-odd-degree} we treat the remaining exceptions from \cref{pa:exceptions} in \cref{sec:char-2,sec:typeA,sec:typeE}.
\end{pa}

\section{Normalisers of Sylow \texorpdfstring{$2$}{2}-Subgroups}\label{sec:normalisers}
\begin{pa}
Assume $S$ is a finite group with a trivial centre and let $S \leqslant A \leqslant \Aut(S)$ be an extension of $S$ whose quotient $A/S$ is a $2$-group. If $Q \in \Syl_2(A)$ then $SQ/S \in \Syl_2(A/S)$ so we must have $SQ/S = A/S$, i.e., $A = SQ$. The intersection $P = S \cap Q \in \Syl_2(S)$ is a Sylow $2$-subgroup of $S$, which is normal in $Q$. We wish to record some elementary lemmas regarding the relationship between $N_A(Q)$ and $N_S(P)$.
\end{pa}

\begin{lem}\label{lem:norm-2-comp-exist}
If $N_S(P) = P \times V$ has a normal $2$-complement $V$, then $N_A(Q) = Q\times C_V(Q)$. Hence, $N_A(Q)$ also has a normal $2$-complement. In particular, if $S$ has a self-normalising Sylow $2$-subgroup, then so does $G$.
\end{lem}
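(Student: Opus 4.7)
The plan is to locate $N_A(Q)$ inside $N_A(P)$ and then extract the claimed direct product by a stable-automorphism argument powered by the hypothesis $N_S(P)=P\times V$. First, any $a\in N_A(Q)$ normalises $S$ (as $S\triangleleft A$) and therefore also $Q\cap S=P$, so $N_A(Q)\leq N_A(P)$. To describe $N_A(P)$ I would apply the Frattini argument to $P\in\Syl_2(S)$: since $A=S\cdot N_A(P)$ and $N_A(P)\cap S=N_S(P)=P\times V$, an index count gives
\[
  |N_A(P)| \;=\; \frac{|A|\,|N_S(P)|}{|S|} \;=\; \frac{|Q|}{|P|}\cdot|P|\,|V| \;=\; |Q|\,|V|.
\]
Both $Q$ and $V$ lie in $N_A(P)$ with $Q\cap V\leq Q\cap S\cap V = P\cap V = 1$, so this forces $N_A(P)=QV$; since $V$ is characteristic in $N_S(P)$ and $Q$ normalises $N_S(P)$, we in fact have $N_A(P)=Q\ltimes V$.

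Writing a typical element of $N_A(P)$ as $qv$ with $q\in Q$ and $v\in V$, the identity $(qv)Q(qv)^{-1}=q(vQv^{-1})q^{-1}$ shows that $qv\in N_A(Q)$ if and only if $v$ lies in $N_V(Q):=\{v\in V:vQv^{-1}=Q\}$. Hence $N_A(Q)=Q\cdot N_V(Q)$, and the entire lemma reduces to proving $N_V(Q)=C_V(Q)$.

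This last equality is the only step I expect to require thought. Fix $v\in N_V(Q)$ and let $\alpha\in\mathrm{Aut}(Q)$ be conjugation by $v$. The direct-product hypothesis says $v$ centralises $P$, so $\alpha$ is trivial on $P$; and since $v\in S$ acts trivially on $A/S\cong Q/P$, $\alpha$ also induces the identity on $Q/P$. Writing $\alpha(q)=qf(q)$ with $f(q)\in P$ and using $\alpha|_P=\mathrm{id}$, an easy induction gives $\alpha^k(q)=qf(q)^k$, so the order of $\alpha$ divides the exponent of $P$ and is in particular a power of $2$. On the other hand that order divides $|v|$, which is odd, so $\alpha=\mathrm{id}$ and $v\in C_V(Q)$. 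Thus $N_V(Q)=C_V(Q)$; since $C_V(Q)$ centralises $Q$ and $C_V(Q)\cap Q\leq V\cap P=1$, the product $N_A(Q)=Q\cdot C_V(Q)$ is direct, giving $N_A(Q)=Q\times C_V(Q)$ with normal $2$-complement $C_V(Q)$. The final assertion of the lemma is the special case $V=1$.
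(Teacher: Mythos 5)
Your argument is correct, and it reaches the same intermediate description $N_A(Q)=Q\cdot N_V(Q)$ as the paper, but by a somewhat different path and with a different finishing argument. The paper observes directly from $A=SQ$ that $N_A(Q)=N_S(Q)Q$, notes $N_S(Q)\leqslant N_S(P)$ so $N_S(Q)=P\times N_V(Q)$, and then shows $Q$ normalises $V$ (as the Hall $2'$-subgroup of the $Q$-invariant group $C_S(P)=Z(P)\times V$), from which $Q$ and $N_V(Q)$ mutually normalise each other and, having coprime orders, must commute. You instead detour through $N_A(Q)\leqslant N_A(P)$, use a Frattini argument and an index count to identify $N_A(P)=Q\ltimes V$, and then locate $N_A(Q)$ inside it; this is longer than the paper's ``easy calculation'' but perfectly sound. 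The genuine difference is in proving $N_V(Q)=C_V(Q)$: you compute with the automorphism $\alpha$ of $Q$ given by conjugation by $v\in N_V(Q)$, show $\alpha$ is trivial on $P$ (here is exactly where the direct product, rather than a mere semidirect product, is needed) and on $Q/P$, and deduce from $\alpha^k(q)=qf(q)^k$ that $|\alpha|$ is simultaneously a $2$-power and odd, hence trivial. This makes explicit the standard coprime-action fact that the paper leaves implicit in the phrase ``so $N_A(Q)=Q\times C_V(Q)$,'' and is arguably a cleaner way to see why the hypothesis enters. Both approaches buy the same conclusion; yours is more self-contained, the paper's is shorter.
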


\begin{proof}
By assumption we have $N_S(P) = P \times V$ with $V \leqslant N_S(P)$ a $2'$-group. An easy calculation shows that $N_A(Q) = N_S(Q)Q$. Moreover, as $N_S(Q) \leqslant N_S(P)$ we have $N_S(Q) = P \times N_V(Q)$ so $N_A(Q) = Q \rtimes  N_V(Q)$. Note that $C_S(P) = Z(P) \times V$ and as $Q$ normalises both $S$ and $P$ it must normalise $V$. Hence $Q$ normalises $N_V(Q)$ so $N_A(Q) = Q \times C_V(Q)$ as desired.
\end{proof}

\begin{lem}\label{lem:norm-2-comp-no-exist}
Assume $C_S(P) = Z(P)$ and $N_S(P) = P \rtimes V$ with $V \leqslant S$ a $2'$-group. Then either $N_A(Q) = Q$ or $N_A(Q)$ has no normal $2$-complement.
\end{lem}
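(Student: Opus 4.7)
\begin{pa}
The plan is to reuse the computation from the proof of \cref{lem:norm-2-comp-exist} to pin down the structure of $N_A(Q)$ as precisely as possible, and then show that the hypothesis $C_S(P) = Z(P)$ forces a dichotomy. First, exactly as before, one has $N_A(Q) = N_S(Q)Q$ and $N_S(Q) \leqslant N_S(P)$, since $P = S \cap Q$ is preserved by any element of $S$ normalising $Q$. Using that $P \leqslant Q \leqslant N_S(Q)$ and applying the Dedekind identity inside $N_S(P) = P \rtimes V$ gives $N_S(Q) = P \rtimes N_V(Q)$, and hence $N_A(Q) = N_V(Q) Q$. Since $V$ is a $2'$-group and $Q$ is a $2$-group, the intersection $N_V(Q) \cap Q$ is trivial, so in fact $N_A(Q) = Q \rtimes N_V(Q)$.
\end{pa}

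\begin{pa}
With this description in hand, if $N_V(Q) = 1$ then $N_A(Q) = Q$ and we are done. Otherwise, suppose for contradiction that $N_A(Q)$ admits a normal $2$-complement $W$. Then $W$ is the unique Hall $2'$-subgroup of $N_A(Q)$; since $N_V(Q)$ is also a Hall $2'$-subgroup, we must have $W = N_V(Q)$. In particular $N_V(Q)$ is normal in $N_A(Q) = Q \rtimes N_V(Q)$, which forces $Q$ to centralise $N_V(Q)$, so that $N_A(Q) = Q \times N_V(Q)$.
\end{pa}

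\begin{pa}
To finish, I would invoke the hypothesis $C_S(P) = Z(P)$: since $P \leqslant Q$, any element of $S$ centralising $Q$ certainly centralises $P$, so $N_V(Q) \leqslant C_S(Q) \leqslant C_S(P) = Z(P)$. But $N_V(Q)$ is a $2'$-group while $Z(P)$ is a $2$-group, so $N_V(Q) = 1$, contradicting the assumption that $N_V(Q) \neq 1$. This completes the dichotomy. The argument is essentially a matter of bookkeeping on top of the structural decomposition; the only genuine input is the centraliser hypothesis, and I do not anticipate a serious obstacle beyond verifying that the Dedekind-style calculations carry through identically to the split case of \cref{lem:norm-2-comp-exist}.
\end{pa}
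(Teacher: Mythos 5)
Your proof is correct and rests on the same core idea as the paper's: anything centralising $Q$ centralises $P$, and the hypothesis $C_S(P) = Z(P)$ then forces the odd part of a hypothetical normal $2$-complement to be trivial. One small slip in the Dedekind step: you write ``$P \leqslant Q \leqslant N_S(Q)$'', but $Q$ need not be contained in $S$; what you actually need (and use) is $P \leqslant N_S(Q)$, which holds because $P = S\cap Q \leqslant Q$ normalises $Q$. Your route differs from the paper's only in bookkeeping: you first pin down $N_A(Q) = Q \rtimes N_V(Q)$ and identify the putative normal $2$-complement with $N_V(Q)$ via uniqueness of normal Hall subgroups, whereas the paper starts from the decomposition $N_A(Q) = Q\times V'$ and locates $V'$ inside $V$ by passing through $N_A(P) = QV$; your version conveniently sidesteps the paper's assertion that every $2'$-element of $N_A(P)$ lies in $V$, which as literally stated needs a conjugacy (or Hall-subgroup) argument to justify.
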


\begin{proof}
Assume that $N_A(Q) = Q \times V'$ with $V' \leqslant A$ a $2'$-group.  Then as $V' \leqslant C_A(Q)$ we must have $V' \leqslant C_A(P) \leqslant N_A(P)$. An easy calculation shows that $N_A(P) = N_S(P)Q = QV$. As $Q$ is a $2$-group and $V$ is a $2'$-group we must have $Q \cap V = \{1\}$ so any $2'$-element in $N_A(P)$ is contained in $V$. As $V'$ is a $2'$-group this implies $V' \leqslant C_V(P) \leqslant C_S(P)$ but this implies $V' = \{1\}$ because $C_S(P) = Z(P)$ is a $2$-group by assumption.
\end{proof}

\section{Passing From Almost Simple to Simple Groups}\label{sec:passing-almost-simple}
\begin{assumption}
If $G$ is a finite group then we denote by $B_0(G)$ the principal $2$-block of $G$. Moreover, we denote by $\Irr(B_0(G))$ the ordinary irreducible characters of $G$ contained in the block and by $\Irr_{2'}(B_0(G)) = \Irr(B_0(G)) \cap \Irr_{2'}(G)$ those which have odd degree.
\end{assumption}

\begin{pa}
The main result of \cite{navarro-vallejo:2017:2-Brauer-correspondent-blocks} states that \cref{conj:main} holds for all finite groups if it holds for all almost simple groups $A$ whose quotient $A/S$ by its nonabelian socle $S$ is a $2$-group. In this section we develop several lemmas which allow us to deduce, in certain scenarios, that \cref{conj:main} holds for $A$ if it holds for $S$. \Cref{lem:norm-2-comp-exist} already goes in this direction given the following result of Brauer, see \cite[Corollary 6.13]{navarro:1998:characters-and-blocks}.
\end{pa}

\begin{lem}[Brauer]\label{lem:Brauer}
Let $G$ be a finite group and $P \in \Syl_2(G)$.  Then the principal block $B_0(G)$ contains only one irreducible Brauer character if and only if $N_G(P)$ has a normal $2$-complement.
\end{lem}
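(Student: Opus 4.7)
This is a classical theorem of Brauer, cited above as Corollary 6.13 of Navarro's book. The plan is to prove both directions by relating $B_0(G)$ to $B_0(N_G(P))$ via Brauer's third main theorem and then analysing the normaliser directly.

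For the forward direction $(\Rightarrow)$: Suppose $B_0(G)$ contains only the trivial irreducible Brauer character. Brauer's third main theorem identifies $B_0(G)$ as the Brauer correspondent of $B_0(N_G(P))$. Transferring the condition $l(B_0(G)) = 1$ across this correspondence — via the analysis of central characters on $2$-regular class sums modulo a prime above $2$ that constitutes the body of Brauer's argument — forces $l(B_0(N_G(P))) = 1$. Since $P$ is normal in $N_G(P)$, a direct analysis of the principal block of the normaliser (in which, by Schur--Zassenhaus, $N_G(P) = P \rtimes V$ for some $2'$-complement $V$) then shows that having a unique Brauer character in $B_0(N_G(P))$ compels the complement $V$ to be normal. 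Hence $N_G(P) = P \times V$ exhibits a normal $2$-complement.

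For the reverse direction $(\Leftarrow)$: Suppose $N_G(P) = P \times V$ with $V$ a $2'$-subgroup. Every simple $\mathbb{F}_2 N_G(P)$-module has the normal $2$-subgroup $P$ in its kernel and so factors through the $2'$-quotient $N_G(P)/P \cong V$, so the irreducible Brauer characters of $N_G(P)$ are precisely the inflations of the ordinary characters of $V$. Writing the blocks of $\mathbb{F}_2[P \times V]$ as tensor products of those of $\mathbb{F}_2 P$ (which has a single block) and $\mathbb{F}_2 V$ (each irreducible character of which is its own block of defect zero), one sees that only the inflation of the trivial character of $V$ lies in $B_0(N_G(P))$; thus $l(B_0(N_G(P))) = 1$. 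Transferring back through Brauer's third main theorem then concludes $l(B_0(G)) = 1$ for the principal block of $G$ itself.

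The principal obstacle is the correspondence of $l$-values between $B_0(G)$ and $B_0(N_G(P))$: Brauer-corresponding blocks need not in general have the same number of irreducible Brauer characters, and the link must be forged specifically for the principal block. This transfer — between the local structural datum at the Sylow normaliser and the global modular datum of $B_0(G)$ — is precisely the substantive content of Brauer's classical argument, carried out in Chapter 6 of Navarro's book through the systematic study of central characters on $2$-regular classes.
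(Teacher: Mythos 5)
The paper gives no proof of this lemma at all: it is quoted as a classical theorem of Brauer, cited to Corollary 6.13 of Navarro's book, whose content is that a finite group $H$ satisfies $l(B_0(H))=1$ if and only if $H$ has a normal $2$-complement; in the paper this is applied with $H=N_G(P)$ (note that \cref{conj:main} concerns the principal block of the Sylow normaliser, and this is how \cref{lem:Brauer} enters the proof of \cref{conj:ifstatement}). Your proposal instead tries to relate $l(B_0(G))$ of $G$ itself to the local condition by transporting the value $l=1$ back and forth across the Brauer correspondence furnished by the third main theorem. That transport is the genuine gap: Brauer's third main theorem says only that $B_0(N_G(P))$ induces $B_0(G)$; it gives no comparison between the numbers of irreducible Brauer characters of a block and of its Brauer correspondent, and no such comparison holds, even for principal blocks. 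You acknowledge this and then assert that the required link ``is precisely the substantive content of Brauer's classical argument'', but it is not: Brauer's argument ties $l(B_0(G))=1$ to the \emph{global} condition that $G$ itself has a normal $2$-complement, not to any invariant of $B_0(N_G(P))$.

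Concretely, the step you need in the reverse direction --- from $l(B_0(N_G(P)))=1$ to $l(B_0(G))=1$ --- is false. Take $G=\mathsf{S}_4$ with $P\cong D_8\in\Syl_2(G)$: then $N_G(P)=P$, so $N_G(P)$ has a (trivial) normal $2$-complement and $l(B_0(N_G(P)))=1$, yet $\mathsf{S}_4$ has a unique $2$-block containing both of its irreducible $2$-Brauer characters (the trivial one and the $2$-dimensional one inflated from $\mathsf{S}_3$), so $l(B_0(G))=2$. This also shows the lemma must be understood, as it is used in \cref{conj:ifstatement}, as Brauer's cited theorem applied to the group $N_G(P)$ (i.e.\ with $B_0(N_G(P))$ as the relevant principal block); no correspondence-transfer argument from the Sylow normaliser back to $G$ can be repaired. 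Your forward direction has the same defect: the claim that $l(B_0(G))=1$ forces $l(B_0(N_G(P)))=1$ ``via the analysis of central characters'' is exactly what lacks a proof, and the correct route is global --- $l(B_0(G))=1$ gives a normal $2$-complement $K\lhd G$ by the cited theorem, and then $N_G(P)=P\times C_K(P)$ has a normal $2$-complement by the same computation as in \cref{lem:norm-2-comp-exist}. In short, the substantive content is the equivalence of $l(B_0(H))=1$ with $H$ having a normal $2$-complement for the group $H$ in question, and it should simply be invoked for the appropriate group rather than shuttled through the Brauer correspondence.
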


\begin{pa}
Let $G$ be a finite group. We now turn our attention to the statement that every character $\chi \in \Irr_{2'}(B_0(G))$ is $\sigma$-fixed. For a group $X$ acting on the irreducible characters $\irr(G)$ we write $\irr_{2'}(G)_X$ for the members of $\irr_{2'}(G)$ that are invariant under $X$.  
\end{pa}

\begin{lem}\label{lem:2powerrestriction}
Assume $G$ is a finite group with normal subgroup $N\lhd G$ whose quotient $G/N$ is a $2$-group. Then given any odd-degree character $\chi \in \Irr_{2'}(G)$, the restriction $\Res_N^G(\chi) \in \Irr_{2'}(N)_{G/N}$ is irreducible. Furthermore, $\chi\in\irr_{2'}(B_0(G))$ if and only if $\Res^{G}_N(\chi)\in \irr_{2'}(B_0(N))_{G/N}$ and $\chi$ is $\sigma$-fixed if and only if $\Res^{G}_N(\chi)$ is $\sigma$-fixed.
\end{lem}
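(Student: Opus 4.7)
The plan has three parts. For the irreducibility of $\Res^G_N(\chi)$, I invoke Clifford theory: writing $\Res^G_N(\chi) = e(\theta_1 + \cdots + \theta_t)$ as the sum of $G$-conjugates of some $\theta \in \Irr(N)$ with ramification index $e$, we have $\chi(1) = et\,\theta(1)$ while $et$ divides $[G:N]$, a power of $2$. Since $\chi(1)$ is odd, this forces $e = t = 1$, so $\Res^G_N(\chi) = \theta$ is irreducible and $G$-invariant.

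For the block statement, I would use the standard fact that when $G/N$ is a $2$-group, $B_0(G)$ is the unique $2$-block of $G$ covering $B_0(N)$ (cf.\ \cite{navarro:1998:characters-and-blocks}). Combined with the first part, since $\theta = \Res^G_N(\chi)$ is the unique constituent of the restriction, $\chi \in \Irr(B_0(G))$ if and only if $\theta \in \Irr(B_0(N))$.

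For the Galois part, one direction is immediate because $\sigma$ commutes with restriction. For the converse, suppose $\theta^\sigma = \theta$. Then $\chi^\sigma$ is another irreducible extension of $\theta$ to $G$, so by Gallagher's theorem $\chi^\sigma = \chi\cdot\lambda$ for some linear $\lambda \in \Irr(G/N)$, necessarily of $2$-power order. Applying determinants yields $\det(\chi)^\sigma = \det(\chi)\cdot\lambda^{\chi(1)}$. Writing $\det(\chi) = \alpha\beta$ with $\alpha$ its $2$-part and $\beta$ its $2'$-part, the action of $\sigma$ fixes $\alpha$ and squares $\beta$, so $\det(\chi)^\sigma/\det(\chi) = \beta$. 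Hence $\lambda^{\chi(1)} = \beta$; the left side has $2$-power order while the right has odd order, so both are trivial. Since $\chi(1)$ is odd and therefore coprime to the order of $\lambda$, we conclude $\lambda = 1$ and $\chi^\sigma = \chi$.

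The main obstacle is precisely this Galois step: converting the $\sigma$-invariance of $\theta$ back into the $\sigma$-invariance of $\chi$, which requires detecting the twist $\lambda$ on $\det(\chi)$ and exploiting the parity mismatch between its $2$- and $2'$-parts that is afforded by the odd-degree hypothesis.
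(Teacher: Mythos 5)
Your proposal is correct and follows essentially the same approach as the paper: Clifford theory for irreducibility of the restriction, uniqueness of the block covering $B_0(N)$ for the block statement, and the Gallagher-plus-determinant computation for the Galois statement. The paper simply delegates that last step to a cited lemma (Lemma~3.4 of the first author's earlier paper on odd-degree characters and self-normalizing Sylow $2$-subgroups), whose proof is exactly the argument you spell out — writing $\chi^{\sigma}=\chi\lambda$ via Gallagher, taking determinants, and exploiting the parity mismatch between the $2$-part and $2'$-part of $\det(\chi)$.
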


\begin{proof}
Let $\chi\in \irr_{2'}(G)$ and let $\varphi\in\irr(N)$ satisfy $\langle \Res_N^G(\chi), \varphi\rangle\neq 0$.  By Clifford theory, $\chi(1)/\varphi(1)$ divides the index $[G:N]$ which is a $2$-power.  Since $\chi(1)$ is odd it follows that $\chi(1)=\varphi(1)$ and $\Res_N^G(\chi)=\varphi$ is irreducible and invariant under $G/N$. The second statement follows from the observation that the principal block $B_0(G)$ is the only block of $G$ that covers $B_0(N)$, see \cite[Lemma 5.1]{navarro-tiep-vallejo:2016:local-blocks-with-one-simple-module}.  The last statement follows from \cite[Lemma 3.4]{schaefferfry:2016:odd-degree-characters-and-self-normalizing-sylow-subgroups}.
\end{proof}

\begin{cor}\label{conj:ifstatement}
Let $A = SQ$ be an almost simple group with socle $S$ and $Q \in \Syl_2(A)$. Assume $N_A(Q)$ has a normal $2$-complement. If every $Q$-invariant $\chi\in\irr_{2'}(B_0(S))$ is $\sigma$-fixed, then $A$ satisfies \cref{conj:main}.
\end{cor}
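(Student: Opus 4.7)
The plan is to show that both sides of the biconditional in \cref{conj:main} hold for $A$, under the stated hypotheses. On the ``Brauer side'', I would observe that $Q$ is normal in $N_A(Q)$, so $Q$ is the unique Sylow $2$-subgroup of $N_A(Q)$ and $N_{N_A(Q)}(Q) = N_A(Q)$. Applying \cref{lem:Brauer} to the group $N_A(Q)$, the assumption that $N_A(Q)$ has a normal $2$-complement then immediately yields that $B_0(N_A(Q))$ contains a single irreducible Brauer character.

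For the ``Galois side'', I would take an arbitrary $\chi \in \irr_{2'}(B_0(A))$ and show that it is $\sigma$-fixed. The essential tool is \cref{lem:2powerrestriction} applied to the normal subgroup $S$ of $A$, whose quotient $A/S \cong Q/(Q \cap S)$ is a $2$-group. That lemma identifies $\Res_S^A(\chi)$ as an element of $\irr_{2'}(B_0(S))$ invariant under $A/S$, and further guarantees that $\chi$ is $\sigma$-fixed if and only if $\Res_S^A(\chi)$ is. Since inner automorphisms of $S$ act trivially on $\irr(S)$ and $A = SQ$, the $A/S$-action on $\irr(S)$ factors through $Q$, so $A/S$-invariance and $Q$-invariance coincide for characters of $S$. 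The hypothesis then supplies the $\sigma$-fixedness of $\Res_S^A(\chi)$, and hence of $\chi$. There is no substantive obstacle: the corollary is essentially a formal consequence of \cref{lem:Brauer} and \cref{lem:2powerrestriction}, once $A/S$-invariance is identified with $Q$-invariance via $A = SQ$.
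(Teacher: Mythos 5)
Your proposal is correct and follows exactly the route the paper intends: the paper's proof is just the one-line "immediate consequence of \cref{lem:Brauer,lem:2powerrestriction}", and you have correctly unpacked both applications — applying \cref{lem:Brauer} to the group $N_A(Q)$ itself (where $Q$ is the unique, normal Sylow $2$-subgroup) to settle the Brauer-character side, and applying \cref{lem:2powerrestriction} to $S \lhd A$ together with the identification of $A/S$-invariance with $Q$-invariance to settle the Galois side. No gaps.
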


\begin{proof}
This is an immediate consequence of \cref{lem:Brauer,lem:2powerrestriction}.
\end{proof}

\begin{cor}\label{rem:Pnormal2complement}
Let $S$ be a finite simple group such that $N_S(P)$ has a normal $2$-complement for some $P \in \Syl_2(S)$. Assume every $\chi\in\Irr_{2'}(B_0(S))$ is $\sigma$-fixed, i.e., $S$ satisfies \cref{conj:main}. Then any almost simple group $S \leqslant A \leqslant \Aut(S)$ with $A/S$ a $2$-group satisfies \cref{conj:main}.
\end{cor}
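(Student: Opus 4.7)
The plan is straightforward: this corollary should fall out by assembling \cref{lem:norm-2-comp-exist} and \cref{conj:ifstatement}. Fix a Sylow $2$-subgroup $Q \in \Syl_2(A)$. Since $A/S$ is a $2$-group, as in the opening paragraph of \cref{sec:normalisers}, one has $A = SQ$ and $P := S \cap Q$ is a Sylow $2$-subgroup of $S$ normal in $Q$. All Sylow $2$-subgroups of $S$ being conjugate, I may assume the $P$ in the statement of the corollary is this $P$, so by hypothesis $N_S(P) = P \times V$ with $V$ a $2'$-subgroup.

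First I would verify the ``local'' side of \cref{conj:main} for $A$. Applying \cref{lem:norm-2-comp-exist} with this choice of $P$ and $Q$ gives $N_A(Q) = Q \times C_V(Q)$; in particular $N_A(Q)$ has a normal $2$-complement, so by \cref{lem:Brauer} (Brauer) the principal $2$-block $B_0(N_A(Q))$ contains a unique irreducible Brauer character.

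Next I would handle the ``global'' side. By \cref{conj:ifstatement}, since $N_A(Q)$ has a normal $2$-complement, it suffices to show that every $Q$-invariant character in $\Irr_{2'}(B_0(S))$ is $\sigma$-fixed. But by hypothesis \emph{every} character in $\Irr_{2'}(B_0(S))$ is $\sigma$-fixed, without any invariance restriction, so the $Q$-invariant ones certainly are. Hence \cref{conj:ifstatement} delivers that $A$ satisfies \cref{conj:main}.

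There is no substantive obstacle: the corollary is a direct bookkeeping consequence of \cref{lem:norm-2-comp-exist,lem:Brauer,conj:ifstatement}. The one subtlety worth flagging is the compatibility of Sylow choices -- one needs $P = S \cap Q$ in order to feed $(P,Q)$ into \cref{lem:norm-2-comp-exist} -- but this is harmless thanks to Sylow conjugacy in $S$.
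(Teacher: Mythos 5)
Your proposal is correct and follows essentially the same route as the paper: \cref{lem:norm-2-comp-exist} to transport the normal $2$-complement from $N_S(P)$ to $N_A(Q)$, and then \cref{lem:2powerrestriction} (repackaged for you inside \cref{conj:ifstatement}) to transport $\sigma$-fixedness of $\Irr_{2'}(B_0(S))$ up to $A$. The only cosmetic difference is that the paper invokes \cref{lem:2powerrestriction} directly rather than routing through \cref{conj:ifstatement}, which makes your explicit appeal to \cref{lem:Brauer} on the local side redundant but not wrong.
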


\begin{proof}
If $Q \in \Syl_2(A)$ then by \cref{lem:norm-2-comp-exist} $N_A(Q)$ has a normal $2$-complement because $N_S(P)$ does. Moreover, by \cref{lem:2powerrestriction} we have every member of $\Irr_{2'}(B_0(A))$ is fixed by $\sigma$ because every member of  $\Irr_{2'}(B_0(S))$ is.
\end{proof}

\begin{pa}
It is known that all finite simple groups are SN2S-Good, see \cite{schaefferfry:2016:odd-degree-characters-and-self-normalizing-sylow-subgroups,schaeffer-fry-taylor:2017:on-self-normalising-sylow-2-subgroups,schaeffer-fry:2017:galois-automorphisms-harish-chandra}. We would like to use these previous results to deduce that if a finite simple group $S$ is SN2S-Good, then any almost simple group $A$ with socle $S$ and quotient $A/S$ a $2$-group satisfies \cref{conj:main}. Unfortunately there are exceptions to this, but we deal with all such cases in the following sections. Before proceeding we will need the following lemma.
\end{pa}

\begin{lem}\label{lem:principal-block-G/N}
Assume $G$ is a finite group and $N \lhd G$ is a normal subgroup. If we identify $\Irr(G/N)$ as a subset of $\Irr(G)$ then we have $\Irr(B_0(G/N)) \subseteq \Irr(B_0(G))$. Moreover, if $N$ is a $2$-group and $G/C_G(N)$ is a $2$-group then we have
%%%%
\begin{equation*}
\Irr(B_0(G/N)) = \{\chi \in \Irr(B_0(G)) \mid N \leqslant \Ker(\chi)\}.
\end{equation*}
%%%%
\end{lem}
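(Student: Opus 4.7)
The plan is to prove the first inclusion by a direct central-character calculation and the equality by using the hypothesis $G/C_G(N)$ a $2$-group to control lifts of $2$-regular conjugacy classes from $G/N$ to $G$.

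For the first inclusion, let $\chi \in \Irr(B_0(G/N))$ with inflation $\tilde\chi \in \Irr(G)$, and let $K$ be a $G$-class with image $\bar K$ in $G/N$. The canonical projection $\pi\colon \mathbb{Z}G \to \mathbb{Z}(G/N)$ sends $K^+$ to $(|K|/|\bar K|)\,\bar K^+$, which yields the central-character identity $\omega_{\tilde\chi}(K^+) = (|K|/|\bar K|)\,\omega_\chi(\bar K^+)$. Working in a $2$-modular system with maximal ideal $\mathfrak{m}$ of residue characteristic $2$, the hypothesis $\chi \in \Irr(B_0(G/N))$ gives $\omega_\chi(\bar K^+) \equiv |\bar K| \pmod{\mathfrak{m}}$. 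Multiplying by $|K|/|\bar K| \in R$ preserves the congruence and yields $\omega_{\tilde\chi}(K^+) \equiv |K| = \omega_{1_G}(K^+) \pmod{\mathfrak{m}}$, so $\tilde\chi \in \Irr(B_0(G))$.

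For the second statement, the inclusion $\supseteq$ is immediate from the first part. For $\subseteq$, let $\chi \in \Irr(B_0(G))$ with $N \leqslant \Ker(\chi)$, corresponding to $\bar\chi \in \Irr(G/N)$. By the classical fact that block membership is determined by central-character values on $2$-regular class sums (see e.g.\ \cite{navarro:1998:characters-and-blocks}), it suffices to check $\omega_{\bar\chi}(\bar K^+) \equiv |\bar K| \pmod{\mathfrak{m}}$ for every $2$-regular $(G/N)$-class $\bar K$. Since $N$ is a $2$-group, a $2$-regular representative $\bar g \in \bar K$ lifts to a $2$-regular element $g \in G$ by taking the $2'$-part of any lift. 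The hypothesis $G/C_G(N)$ a $2$-group then forces $g \in C_G(N)$, because the image of $g$ there is a $2$-regular element of a $2$-group, hence trivial. Any $G$-conjugate of $g$ lying in the coset $gN$ has the form $gn$ with $n \in N$; since $g$ and $n$ commute with coprime orders, $|gn| = |g|\cdot|n|$, which equals $|g|$ only if $n = 1$. Hence $g^G \cap gN = \{g\}$, so $|K|/|\bar K| = 1$ for $K = g^G$, and the central-character identity degenerates to $\omega_{\bar\chi}(\bar K^+) = \omega_\chi(K^+) \equiv |K| = |\bar K| \pmod{\mathfrak{m}}$, as required.

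The main subtlety is the reduction to $2$-regular class sums, without which the argument breaks for classes $\bar K$ where $|K|/|\bar K|$ is not a unit modulo $\mathfrak{m}$. The hypothesis that $G/C_G(N)$ is a $2$-group is essential: in the example $G = A_4$, $N = V_4$ (with $G/C_G(N) = C_3$ not a $2$-group), the principal $2$-block $B_0(A_4)$ contains all three linear characters inflated from $C_3$, yet $B_0(C_3)$ contains only the trivial character, so the equality would fail.
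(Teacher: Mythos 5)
Your proof is correct and, unlike the paper---whose ``proof'' is simply a citation to \cite[7.6]{navarro:1998:characters-and-blocks} plus the remark that $1_{G/N}$ inflates to $1_G$---it is a genuine self-contained argument. For the first inclusion you compute with central characters directly: the identity $\omega_{\tilde\chi}(K^+)=(|K|/|\bar K|)\,\omega_{\chi}(\bar K^+)$ is exactly what the map $Z(RG)\to Z(R(G/N))$ gives, and multiplying the congruence $\omega_{\chi}(\bar K^+)\equiv|\bar K|\pmod{\mathfrak m}$ by the integer $|K|/|\bar K|$ is harmless regardless of its $2$-adic valuation, so $\tilde\chi$ and $1_G$ have the same central character mod $\mathfrak m$ on every class sum. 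That is a complete and clean proof of block domination in the principal-block case.

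The second part is where the real content lies, and you correctly identified the two ingredients that make it work. First, Osima's theorem (block idempotents are supported on $p$-regular class sums) reduces the check to $2$-regular classes of $G/N$; without this reduction the argument does indeed break, since on a general class $|K|/|\bar K|$ can lie in $\mathfrak m$ and the congruence becomes vacuous. Second, the hypothesis that $N$ is a $2$-group lets you lift a $2$-regular class of $G/N$ to a $2$-regular element $g\in G$ (the $2'$-part of any lift), and the hypothesis that $G/C_G(N)$ is a $2$-group forces $g\in C_G(N)$; the coprime-order computation then gives $g^G\cap gN=\{g\}$, i.e.\ $|K|/|\bar K|=1$, so the central-character congruence passes down with no loss. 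Your $A_4$ example is a good illustration of why both hypotheses are needed: there $N=V_4=O_2(G)$, yet $G/C_G(N)\cong C_3$, and the $3$-cycle classes have $|K|/|\bar K|=4\in\mathfrak m$, which is precisely where the congruence information is destroyed. In short, your argument is not merely a valid alternative; it exposes exactly what the citation to Navarro's book is doing, and it is the natural proof one would write from scratch.
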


\begin{proof}
This is just \cite[7.6]{navarro:1998:characters-and-blocks} together with the observation that the trivial character of $G/N$ lifts to the trivial character of $G$.
\end{proof}

\begin{prop}\label{prop:reducetoSN2Sgoodness}
Let $S$ be a nonabelian finite simple group and let $S \leqslant A \leqslant \Aut(S)$ be an almost simple group with $A/S$ a $2$-group. If $S$ is SN2S-Good, in the sense of \cite[Definition 1]{schaefferfry:2016:odd-degree-characters-and-self-normalizing-sylow-subgroups}, then $A$ satisfies \cref{conj:main} unless $S$ is one of the following finite simple groups:
%%%%
\begin{itemize}
\item $\tw{2}{\G_2(q)}$, $\mathsf{J}_1$, $\mathsf{J}_2$, $\mathsf{J}_3$, $\mathsf{Suz}$, $\mathsf{HN}$,
\item a simple group of Lie type defined in characteristic $2$ whose split maximal torus is nontrivial,
\item $\A_{n-1}^\pm(q)$ with $q$ odd,
\item $\E_6^\pm(q)$ with $q$ odd.
\end{itemize}
\end{prop}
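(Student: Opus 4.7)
The plan is to leverage the SN2S-Goodness of $S$, together with the established \cref{conj:SNS2} and the structural lemmas of \cref{sec:normalisers,sec:passing-almost-simple}, to derive \cref{conj:main} for $A$. Fix $Q \in \Syl_2(A)$ and $P = S \cap Q \in \Syl_2(S)$. By \cref{lem:Brauer} the left-hand side of \cref{conj:main} for either $S$ or $A$ amounts to the corresponding Sylow-normaliser having a normal $2$-complement, so I split on whether $N_S(P)$ enjoys this property.

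First assume $N_S(P) = P \times V$ has a normal $2$-complement. Then \cref{lem:norm-2-comp-exist} gives $N_A(Q) = Q \times C_V(Q)$, so by \cref{lem:Brauer} $B_0(A)$ has a unique irreducible Brauer character. By \cref{conj:ifstatement} it is enough to show every $Q$-invariant $\chi \in \Irr_{2'}(B_0(S))$ is $\sigma$-fixed. When $V = 1$ this is immediate from \cref{conj:SNS2} applied to $S$, and more generally it is to be read off from the explicit descriptions of $\Aut(S)$-invariant odd-degree characters produced in the proofs of SN2S-Goodness in \cite{schaefferfry:2016:odd-degree-characters-and-self-normalizing-sylow-subgroups,schaeffer-fry-taylor:2017:on-self-normalising-sylow-2-subgroups,schaeffer-fry:2017:galois-automorphisms-harish-chandra}; alternatively one may appeal to \cref{rem:Pnormal2complement}, thereby reducing to \cref{conj:main} for $S$ itself.

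Next assume $N_S(P)$ has no normal $2$-complement, and write $N_S(P) = P \rtimes V$ by Schur--Zassenhaus with $V$ a $2'$-group acting nontrivially on $P$. Under the structural assumption $C_S(P) = Z(P)$, which holds for the non-exceptional simple groups at hand, \cref{lem:norm-2-comp-no-exist} yields the dichotomy $N_A(Q) = Q$ or $N_A(Q)$ has no normal $2$-complement. In the first subcase, \cref{conj:SNS2} for $A$ forces every odd-degree character of $A$, hence every member of $\Irr_{2'}(B_0(A))$, to be $\sigma$-fixed, while both sides of \cref{conj:main} for $A$ hold. In the second subcase the left-hand side of \cref{conj:main} for $A$ fails by \cref{lem:Brauer}, so one must exhibit some $\chi \in \Irr_{2'}(B_0(A))$ that is not $\sigma$-fixed; via \cref{lem:2powerrestriction} this reduces to producing an $A/S$-invariant character in $\Irr_{2'}(B_0(S))$ not fixed by $\sigma$, which the proofs of SN2S-Goodness furnish for all non-exceptional $S$.

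The main obstacle in both cases is to confirm that the $\Aut(S)$-invariant odd-degree characters singled out by SN2S-Goodness actually lie in the principal $2$-block $B_0(S)$, and in the second case also to verify that $C_S(P) = Z(P)$. These are precisely the conditions that can fail for the simple groups on the exceptional list, which is why each of those groups must be treated directly in \cref{sec:sporadics,sec:char-2,sec:typeA,sec:typeE}.
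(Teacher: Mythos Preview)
Your outline has the right two-sided shape but is missing the key structural input that makes the paper's proof go through: Kondrat'ev's classification of simple groups without a self-normalising Sylow $2$-subgroup. The paper does \emph{not} argue abstractly in the non-self-normalising case; instead it observes that if $N_S(P)\neq P$ and $S$ is not on the exceptional list, then by \cite{kondratiev:2004:normalizers-of-sylow-2-subgroups-in-finite-simple-groups} one has $S=\PSp_{2m}(q)$ with $q\equiv\pm 3\pmod 8$ and $m\geqslant 2$. For this single family it then shows concretely that $\Irr_{2'}(S)=\Irr_{2'}(B_0(S))$ (via \cite[7.5]{malle-spaeth:2016:characters-of-odd-degree}, \cite[21.14]{cabanes-enguehard:2004:representation-theory-of-finite-reductive-groups}, and \cref{lem:principal-block-G/N}) and that $\mathrm{InnDiag}(S)$ has self-normalising Sylow $2$-subgroup, so that SN2S-Goodness of $S$ is exactly what is needed.

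Without Kondrat'ev you cannot justify the two assertions you flag in your final paragraph. In your ``normal $2$-complement with $V\neq 1$'' case you reduce to \cref{conj:main} for $S$ via \cref{rem:Pnormal2complement}, but SN2S-Goodness says the \emph{opposite} of what you need here: since $N_S(P)\neq P$, it guarantees a non-$\sigma$-fixed odd-degree character, not that all of $\Irr_{2'}(B_0(S))$ is $\sigma$-fixed. In your ``no normal $2$-complement'' case you assert that $C_S(P)=Z(P)$ and that the SN2S witnesses lie in $B_0(S)$ ``for the non-exceptional simple groups at hand'', but you give no mechanism for checking this across all non-exceptional $S$; the paper avoids this entirely because Kondrat'ev leaves only $\PSp_{2m}(q)$ to consider. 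So the missing idea is precisely the reduction via \cite{kondratiev:2004:normalizers-of-sylow-2-subgroups-in-finite-simple-groups}, after which the remaining verification is a short explicit argument for $\PSp_{2m}(q)$ rather than a case-free appeal to SN2S-Goodness.
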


\begin{proof}
First suppose that $S$ has a self-normalising Sylow $2$-subgroup.  Then $A$ has a self-normalising Sylow $2$-subgroup for any $A=SQ$ with $Q\in\syl_2(A)$, see \cref{lem:norm-2-comp-exist}.  Hence, by \cref{conj:ifstatement}, we have $A$ satisfies \cref{conj:main} if every $\chi\in\irr_{2'}(B_0(S))$ is $\sigma$-fixed.  However, if $S$ is SN2S-Good then all odd-degree irreducible characters of $S$ are $\sigma$-fixed.

If $S$ does not have a self-normalising Sylow $2$-subgroup and is not in the stated list of exceptions then, by \cite{kondratiev:2004:normalizers-of-sylow-2-subgroups-in-finite-simple-groups}, $S$ must be $\PSp_{2m}(q)$ with $q\equiv \pm3\pmod 8$ and $m\geq 2$.  

Now, note that since $q\equiv\pm3\pmod8$, $q$ is an odd power of an odd prime so $S$ has index at most $2$ in $A$.  Specifically, if $A\neq S$ then $A=\mathrm{InnDiag}(S)$.  Furthermore, $\mathrm{InnDiag}(S)$ has a self-normalising Sylow $2$-subgroup by \cite[Lemma 3.17]{schaeffer-fry:2017:galois-automorphisms-harish-chandra}.  By \cite[Lemma 7.5]{malle-spaeth:2016:characters-of-odd-degree} and \cite[Lemma 21.14]{cabanes-enguehard:2004:representation-theory-of-finite-reductive-groups}, we see that $\irr_{2'}(G)=\irr_{2'}(B_0(G))$, where $G=\Sp_{2n}(q)$ is the Schur cover for $S$. Thus, by \cref{lem:principal-block-G/N}, we have $\irr_{2'}(S)=\irr_{2'}(B_0(S))$ since $S=G/Z(G)$ with $|Z(G)|=2$. Using \cref{conj:ifstatement} we see that it suffices to show that there exists a character $\chi\in\irr_{2'}(S)$ which is not $\sigma$-fixed but that every $\mathrm{InnDiag}(S)$-invariant character $\chi \in \irr_{2'}(S)$ is $\sigma$-fixed. That is, it suffices to show that $S$ is SN2S-good.
\end{proof}

\section{The Groups \texorpdfstring{$\tw{2}{\G_2(q)}$, $\mathsf{J}_1$, $\mathsf{J}_2$, $\mathsf{J}_3$, $\mathsf{Suz}$, $\mathsf{HN}$}{2G2(q), J1, J2, J3, Suz, HN}}\label{sec:sporadics}

\begin{prop}
If $S$ is one of the simple groups $\tw{2}{\G_2(q)}$, $\mathsf{J}_1$, $\mathsf{J}_2$, $\mathsf{J}_3$, $\mathsf{Suz}$, or $\mathsf{HN}$ and $S \leqslant A \leqslant \Aut(S)$ is an almost simple group with $A/S$ a $2$-group, then $A$ satisfies \cref{conj:main}.
\end{prop}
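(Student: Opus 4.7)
The plan is to verify \cref{conj:main} directly for each of the listed groups, exploiting the fact that the character tables and local subgroup structure are fully known in every case.

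For the Ree groups $\tw{2}{\G_2(q)}$ with $q = 3^{2m+1}$ and $q > 3$, observe that $\mathrm{Out}(S)$ is cyclic of order $2m+1$, which is odd. The hypothesis that $A/S$ be a $2$-group therefore forces $A = S$, so we need only check the conjecture for $S$ itself. A Sylow $2$-subgroup $P$ of $S$ is elementary abelian of order $8$ with $N_S(P)/P$ a Frobenius group of order $21$ acting regularly on $P \setminus \{1\}$, so $N_S(P)$ has no normal $2$-complement and, by \cref{lem:Brauer}, the left-hand side of \cref{conj:main} fails for $S$. To match this it suffices to exhibit a single odd-degree character in $B_0(S)$ that is not $\sigma$-fixed; this can be read off from the generic character table of $\tw{2}{\G_2(q)}$ together with the known $2$-block distribution.

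For the sporadic groups $\mathsf{J}_1, \mathsf{J}_2, \mathsf{J}_3, \mathsf{Suz}, \mathsf{HN}$, the outer automorphism group is either trivial or cyclic of order $2$, so $A$ is either $S$ itself or the uniquely determined extension $S.2$. The structure of $N_A(Q)$ is available in the ATLAS and, via \cref{lem:Brauer}, decides whether $B_0(A)$ has a unique simple module; the block distribution of $\Irr(A)$ together with the fields of values of its odd-degree characters is tabulated there as well (or obtainable from the GAP Character Table Library), so $\sigma$-fixedness can be read off directly. When $A = S.2$, \cref{lem:2powerrestriction} further reduces the check to identifying the $S.2$-invariants in $\Irr_{2'}(B_0(S))$ and verifying $\sigma$-fixedness there, which is advantageous because the ATLAS tabulates fusion and fields of values more completely for the simple group than for its upward extensions.

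The main obstacle is purely the bookkeeping required in the $S.2$ cases: one must track which odd-degree characters of $S$ in $B_0(S)$ are fixed by the outer involution, whether they extend to characters of $A$ lying in $B_0(A)$, and whether those extensions (equivalently, the original characters of $S$, via \cref{lem:2powerrestriction}) are $\sigma$-fixed. All of this information is explicit in the ATLAS, so the verification ultimately reduces to a finite enumeration rather than to any new structural argument.
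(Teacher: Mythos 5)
Your plan matches the paper's in outline: reduce the proposition, case by case, to a finite verification against known character-theoretic and local data. But the proposal stops where the actual work begins. The paper does not simply "read off" the relevant information; it explicitly identifies the non-$\sigma$-fixed odd-degree characters that witness the failure of the right-hand side of \cref{conj:main} and cites a source for their block membership. For $\tw{2}{\G_2(q)}$ it names $\chi_4$ (in CHEVIE notation), matches it with $\xi_7$ of Landrock--Michler, and cites their paper for principal-block membership; for the five sporadics it names the degrees ($77, 21, 85, 5005, 133$) of the pairs of characters interchanged by $\sigma$ and cites the GAP library for their block membership. Your phrases "this can be read off" and "all of this information is explicit in the ATLAS" are placeholders for exactly that step, so as written the proposal is a plan rather than a proof. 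The paper also exploits the prior SN2S work more economically than you do: \cite[Theorems 4.2 and 4.3]{schaefferfry:2016:odd-degree-characters-and-self-normalizing-sylow-subgroups} already produced the non-$\sigma$-fixed odd-degree characters of $S$ and handled the Galois behaviour of $\Aut(S)$-invariant odd-degree characters, so the only genuinely new verification required here is that those already-identified characters lie in $B_0(S)$; your proposal instead frames the whole thing as a fresh enumeration. One small factual slip: for $\tw{2}{\G_2(q)}$ with $q>3$, the Frobenius group $N_S(P)/P$ of order $21$ does not act regularly on $P\setminus\{1\}$ (there are only $7$ nontrivial elements); it acts transitively, with the order-$7$ Frobenius kernel acting regularly. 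The conclusion you draw (no normal $2$-complement in $N_S(P)$) is still correct since the action is faithful, but the stated reason is off.
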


\begin{proof}
If $S$ is either $\tw{2}{\G_2(q)}$ or $\mathsf{J}_1$ then $S$ does not have a self-normalising Sylow $2$-subgroup and the outer automorphism group has odd order or is trivial, respectively.  In the other cases the outer automorphism group is order $2$ but $\aut(S)$ has a self-normalising Sylow $2$-subgroup.  Hence by \cref{conj:ifstatement}, and  \cite[Theorems 4.2 and 4.3]{schaefferfry:2016:odd-degree-characters-and-self-normalizing-sylow-subgroups}, it suffices to show that the odd degree characters illustrated in the proofs of \cite[Theorems 4.2 and 4.3]{schaefferfry:2016:odd-degree-characters-and-self-normalizing-sylow-subgroups} which are not fixed by $\sigma$ also lie in the principal block. 

In the case of the Ree groups $\tw{2}{\G_2(q)}$ the character $\chi_4$ in the notation of \cite{geck-hiss-lubeck-malle-pfeiffer:1996:chevie}, mentioned in \cite[Theorem 4.3]{schaefferfry:2016:odd-degree-characters-and-self-normalizing-sylow-subgroups} to not be $\sigma$-invariant, is the character $\xi_7$ in the notation of \cite{landrock-michler:1980:principal-2-blocks-of-simple-groups-of-ree-type}, which is shown there to be in the principal block.

The GAP Character Table Library \cite{Breuer:2004:ctbllib-a-gap-package} contains the character tables, Brauer tables, and decomposition matrices for $\mathsf{J}_1, \mathsf{J}_2, \mathsf{J}_3, \mathsf{Suz},$ and $\mathsf{HN}$ in characteristic $2$.  These yield that there are two characters of degree 77, 21, 85, 5005, and 133, respectively, which are interchanged by the action of $\sigma$ and lie in the principal block.
\end{proof}

\section{Generalities on Reductive Groups}\label{sec:red-grp}
\begin{assumption}
From this point forward we assume $p > 0$ is a fixed prime and $\mathbb{K} = \overline{\mathbb{F}}_p$ is an algebraic closure of the finite field $\mathbb{F}_p$ of cardinality $p$.
\end{assumption}

\begin{pa}
Let $\bG$ be a connected reductive algebraic group over $\mathbb{K}$ and let $F : \bG \to \bG$ be a Frobenius endomorphism endowing $\bG$ with an $\mathbb{F}_q$-rational structure $G = \bG^F = \{g \in \bG \mid F(g) = g\}$. Given such a pair $(\bG,F)$ we will denote by $\mathcal{S}(\bG,F)$ the set of all pairs $(\bT,s)$ consisting of an $F$-stable maximal torus $\bT \leqslant \bG$ and a rational semisimple element $s \in \bT^F$. We assume chosen a regular embedding $\iota : \bG \to \widetilde{\bG}$, where $\widetilde{\bG}$ is a connected reductive algebraic group over $\mathbb{K}$ with connected centre. The Frobenius endomorphism on $\widetilde{\bG}$ will also be denoted by $F$ and the group $\widetilde{G}$ denotes the finite group $\widetilde{\bG}^F$.
\end{pa}

\begin{pa}
Let $(\bG^{\star},F^{\star})$ be a pair dual to $(\bG,F)$ and similarly let $(\widetilde{\bG}^{\star},F^{\star})$ be a pair dual to $(\widetilde{\bG},F)$. We will assume that $\iota^{\star} : \widetilde{\bG}^{\star} \to \bG^{\star}$ is a surjective homomorphism of algebraic groups dual to the regular embedding; note this is defined over $\mathbb{F}_q$. To each pair $(\bT^{\star},s) \in \mathcal{S}(\bG^{\star},F^{\star})$ we have a corresponding virtual character $R_{\bT^{\star}}^{\bG}(s)$ of $G=\bG^F$. If $[s] \subseteq G^{\star} := \bG^{\star F^{\star}}$ is a $\bG^{\star F^{\star}}$-conjugacy class of semisimple elements then we have a corresponding (rational) Lusztig series $\mathcal{E}(G,[s]) \subseteq \Irr(G)$. These series form a partition
%%%%
\begin{equation*}
\Irr(G) = \bigsqcup_{[s] \subseteq G^{\star}} \mathcal{E}(G,[s])
\end{equation*}
%%%%
of the irreducible characters. We will need the following well-known lemma concerning restriction of characters from $\widetilde{G}$ to $G$, see \cite[11.7]{bonnafe:2006:sln}.
\end{pa}

\begin{lem}\label{lem:res-preimage}
Assume $\chi \in \mathcal{E}(G,[s])$ is an irreducible character and $\widetilde{\chi} \in \Irr(\widetilde{G})$ is an irreducible character covering $\chi$. Then $\widetilde{\chi} \in \mathcal{E}(\widetilde{G},[\widetilde{s}])$ with $\widetilde{s} \in \widetilde{G}^{\star}$ satisfying $\iota^{\star}(\widetilde{s}) = s$.
\end{lem}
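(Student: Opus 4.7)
The plan is to combine the standard compatibility of Deligne--Lusztig characters with the regular embedding $\iota$ and Clifford theory applied to the normal inclusion $G \lhd \widetilde{G}$.

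First I would invoke the key restriction formula. For any $F^\star$-stable maximal torus $\widetilde{\bT}^\star \leqslant \widetilde{\bG}^\star$, the image $\bT^\star = \iota^\star(\widetilde{\bT}^\star)$ is an $F^\star$-stable maximal torus of $\bG^\star$ (using that $\ker(\iota^\star)$ is a central torus, hence contained in every maximal torus of $\widetilde{\bG}^\star$). The basic compatibility then asserts
\[
\mathrm{Res}^{\widetilde{G}}_G R^{\widetilde{\bG}}_{\widetilde{\bT}^\star}(\widetilde{s}) \;=\; R^{\bG}_{\bT^\star}(\iota^\star(\widetilde{s})) \qquad \text{for every } \widetilde{s} \in \widetilde{\bT}^{\star F^\star}.
\]

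Next, pick a pair $(\bT^\star, s) \in \mathcal{S}(\bG^\star, F^\star)$ witnessing the assumption $\chi \in \mathcal{E}(G, [s])$, i.e.\ with $\langle \chi, R^{\bG}_{\bT^\star}(s) \rangle_G \neq 0$. Applying Lang--Steinberg to the connected kernel of $\iota^\star$, lift this to $(\widetilde{\bT}^\star, \widetilde{s}) \in \mathcal{S}(\widetilde{\bG}^\star, F^\star)$ with $\iota^\star(\widetilde{\bT}^\star) = \bT^\star$ and $\iota^\star(\widetilde{s}) = s$. The restriction formula combined with Frobenius reciprocity then gives
\[
\langle R^{\widetilde{\bG}}_{\widetilde{\bT}^\star}(\widetilde{s}),\, \mathrm{Ind}^{\widetilde{G}}_G \chi \rangle_{\widetilde{G}} \;=\; \langle R^{\bG}_{\bT^\star}(s),\, \chi \rangle_G \;\neq\; 0,
\]
so some constituent $\widetilde{\chi}_0$ of $\mathrm{Ind}^{\widetilde{G}}_G \chi$ lies in $\mathcal{E}(\widetilde{G}, [\widetilde{s}])$.

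Since $\widetilde{\chi}$ also covers $\chi$, it too is a constituent of $\mathrm{Ind}^{\widetilde{G}}_G \chi$, and Clifford theory (noting $\widetilde{G}/G$ is abelian) writes $\widetilde{\chi} = \widetilde{\chi}_0 \otimes \lambda$ for some $\lambda \in \Irr(\widetilde{G}/G)$. Under the standard identification of $\Irr(\widetilde{G}/G)$ with $\ker(\iota^\star)^{F^\star} \leqslant Z(\widetilde{\bG}^\star)^{F^\star}$, the character $\lambda$ corresponds to a central element $z \in \ker(\iota^\star)^{F^\star}$, and the known behaviour of Lusztig series under tensoring by such linear characters yields $\widetilde{\chi} \in \mathcal{E}(\widetilde{G}, [\widetilde{s} z])$. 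Because $z \in \ker(\iota^\star)$ we have $\iota^\star(\widetilde{s} z) = s$, and setting $\widetilde{s}' := \widetilde{s} z$ completes the argument.

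The only real obstacle is bookkeeping around the two standard ingredients---the Deligne--Lusztig restriction formula and the translation of Lusztig series under tensoring by a $\widetilde{G}/G$-character---both of which are classical; as the excerpt notes, the lemma is recorded in \cite[11.7]{bonnafe:2006:sln}.
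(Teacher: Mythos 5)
Your argument is correct and is essentially the standard proof of this compatibility: the paper itself offers no proof, simply citing \cite[11.7]{bonnafe:2006:sln}, and what you have written out (Deligne--Lusztig restriction plus Frobenius reciprocity to locate one constituent of $\mathrm{Ind}_G^{\widetilde{G}}\chi$ in $\mathcal{E}(\widetilde{G},[\widetilde{s}])$, then Clifford theory and the translation of series by $\Irr(\widetilde{G}/G) \cong \Ker(\iota^\star)^{F^\star}$ to reach the others) is precisely the argument behind that reference.
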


\begin{pa}
For the rest of this section we assume that $\gamma \in \Gal(\mathbb{Q}_{|\widetilde{G}|}/\mathbb{Q})$ where $\mathbb{Q}_{|\widetilde{G}|}$ is the field obtained from $\mathbb{Q}$ by adjoining a primitive $|\widetilde{G}|$th root of unity. Moreover, we assume that $\mathcal{E}(G,[s])$ is a $\gamma$-invariant Lusztig series and $\chi \in \mathcal{E}(G,[s])$ is an irreducible character. Let $\widetilde{\chi} \in \Irr(\widetilde{G})$ be a character covering $\chi$ so that $\widetilde{\chi} \in \mathcal{E}(\widetilde{G},\tilde{s})$ with $\tilde{s} \in \widetilde{G}^{\star}$ satisfying $\iota^{\star}(\tilde{s}) = s$ by \cref{lem:res-preimage}. The proof of \cite[3.4]{schaeffer-fry-taylor:2017:on-self-normalising-sylow-2-subgroups} shows that $\mathcal{E}(\widetilde{G},[\tilde{s}])^{\gamma} = \mathcal{E}(\widetilde{G},[\tilde{t}])$ for some semisimple element $\tilde{t} \in \widetilde{G}^{\star}$. Now clearly $\widetilde{\chi}^{\gamma} \in \mathcal{E}(\widetilde{G},[\tilde{t}])$ and $\chi^{\gamma}$ is covered by $\widetilde{\chi}^{\gamma}$ so another application of \cref{lem:res-preimage} shows that $\iota^{\star}(\tilde{t}) = s$ because $\chi^{\gamma} \in \mathcal{E}(G,[s])$ by assumption.
\end{pa}

\begin{pa}
The kernel $\Ker(\iota^{\star})$ is connected, see \cite[2.5]{bonnafe:2006:sln}, so the Lang--Steinberg theorem shows that there exists an element $\tilde{z} \in \Ker(\iota^{\star})^{F^{\star}}$ such that $\tilde{t} = \tilde{s}\tilde{z}$. By \cite[2.6, 11.6]{bonnafe:2006:sln} we have a bijection
%%%%
\begin{align*}
\mathcal{E}(\widetilde{G},[\tilde{s}]) &\to \mathcal{E}(\widetilde{G},[\tilde{s}\tilde{z}])\\
\widetilde{\chi} &\mapsto \widetilde{\chi}\otimes \theta_{\tilde{z}}
\end{align*}
%%%%
where $\theta_{\tilde{z}} \in \Irr(\widetilde{G})$ is the lift of a character of the quotient $\widetilde{G}/G$. With this we can prove the following.
\end{pa}

\begin{prop}\label{prop:extend-fixed}
Let $\gamma \in \Gal(\mathbb{Q}_{|\widetilde{G}|}/\mathbb{Q})$ be a Galois automorphism and $\mathcal{E}(G,[s])$ a $\gamma$-invariant Lusztig series. Assume $\tilde{s} \in \widetilde{G}^{\star}$ is such that $\iota^{\star}(\tilde{s}) = s$ and $\widetilde{\chi} \in \mathcal{E}(\widetilde{G},[\tilde{s}])$ satisfies the following property:
%%%%
\begin{itemize}
	\item[($\star$)] for any $\widetilde{\chi}' \in \mathcal{E}(\widetilde{G},[\tilde{s}])$ we have $\langle \widetilde{\chi}',R_{\widetilde{\bT}^{\star}}^{\widetilde{\bG}}(\tilde{s})\rangle_{\widetilde{G}} = \langle \widetilde{\chi},R_{\widetilde{\bT}^{\star}}^{\widetilde{\bG}}(\tilde{s})\rangle_{\widetilde{G}}$ for all $(\widetilde{\bT}^{\star},\tilde{s}) \in \mathcal{S}(\widetilde{\bG}^{\star},F^{\star})$ if and only if $\widetilde{\chi} = \widetilde{\chi}'$.
\end{itemize}
%%%%
Then if $\chi \in \mathcal{E}(G,s)$ is a constituent of $\Res_{G}^{\widetilde{G}}(\widetilde{\chi})$, so is $\chi^{\gamma}$. In particular, if $\chi$ extends to $\widetilde{G}$ then $\chi^{\gamma} = \chi$.
\end{prop}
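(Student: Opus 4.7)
The plan is to establish the stronger identity
\begin{equation*}
\widetilde{\chi}^{\gamma} = \widetilde{\chi} \otimes \theta_{\tilde{z}}
\end{equation*}
in $\Irr(\widetilde{G})$, from which both parts of the conclusion follow by a brief Clifford-theoretic argument. Since $\theta_{\tilde{z}}$ is inflated from a character of $\widetilde{G}/G$, its restriction to $G$ is trivial, so applying $\Res^{\widetilde{G}}_{G}$ to the identity gives $(\Res^{\widetilde{G}}_{G}(\widetilde{\chi}))^{\gamma} = \Res^{\widetilde{G}}_{G}(\widetilde{\chi})$. Hence $\gamma$ permutes the irreducible constituents of $\Res^{\widetilde{G}}_{G}(\widetilde{\chi})$, proving the first claim. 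When $\chi$ extends to $\widetilde{G}$, Gallagher's theorem forces every character of $\widetilde{G}$ covering $\chi$---including $\widetilde{\chi}$---to restrict to $\chi$ exactly, and the stability of the now singleton constituent set gives $\chi^{\gamma} = \chi$.

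To prove the displayed identity, I would combine the two structural facts from the paragraph preceding the statement: $\widetilde{\chi}^{\gamma} \in \mathcal{E}(\widetilde{G},[\tilde{s}\tilde{z}])$, and $\xi \mapsto \xi \otimes \theta_{\tilde{z}}$ is a bijection $\mathcal{E}(\widetilde{G},[\tilde{s}]) \to \mathcal{E}(\widetilde{G},[\tilde{s}\tilde{z}])$. Hence there is a unique $\widetilde{\chi}' \in \mathcal{E}(\widetilde{G},[\tilde{s}])$ with $\widetilde{\chi}^{\gamma} = \widetilde{\chi}' \otimes \theta_{\tilde{z}}$, and the task reduces to showing $\widetilde{\chi}' = \widetilde{\chi}$. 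Hypothesis $(\star)$ cuts this down to verifying $\langle \widetilde{\chi}', R_{\widetilde{\bT}^{\star}}^{\widetilde{\bG}}(\tilde{u})\rangle_{\widetilde{G}} = \langle \widetilde{\chi}, R_{\widetilde{\bT}^{\star}}^{\widetilde{\bG}}(\tilde{u})\rangle_{\widetilde{G}}$ for every pair $(\widetilde{\bT}^{\star},\tilde{u}) \in \mathcal{S}(\widetilde{\bG}^{\star},F^{\star})$. Applying the relation $R_{\widetilde{\bT}^{\star}}^{\widetilde{\bG}}(\tilde{u}) \otimes \theta_{\tilde{z}} = R_{\widetilde{\bT}^{\star}}^{\widetilde{\bG}}(\tilde{u}\tilde{z})$ from \cite[11.6]{bonnafe:2006:sln} to move the twist across the inner product turns the left-hand side into $\langle \widetilde{\chi}^{\gamma}, R_{\widetilde{\bT}^{\star}}^{\widetilde{\bG}}(\tilde{u}\tilde{z})\rangle_{\widetilde{G}}$; rationality of the inner product combined with the fact that $\gamma$ commutes with complex conjugation on roots of unity rewrites this as $\langle \widetilde{\chi}, R_{\widetilde{\bT}^{\star}}^{\widetilde{\bG}}(\tilde{u}\tilde{z})^{\gamma^{-1}}\rangle_{\widetilde{G}}$. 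The remaining step is to identify the latter with $\langle \widetilde{\chi}, R_{\widetilde{\bT}^{\star}}^{\widetilde{\bG}}(\tilde{u})\rangle_{\widetilde{G}}$ by appeal to the explicit description of the Galois action on Deligne--Lusztig characters employed in the proof of \cite[3.4]{schaeffer-fry-taylor:2017:on-self-normalising-sylow-2-subgroups}.

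The main obstacle is precisely this final identification. Both inner products vanish unless $\tilde{u}$ is $\widetilde{G}^{\star}$-conjugate to $\tilde{s}$, because $\widetilde{\chi} \in \mathcal{E}(\widetilde{G},[\tilde{s}])$, so only pairs attached to $\tilde{s}$ itself need treatment. The element $\tilde{z}$ was extracted a priori so that $\gamma$ sends $[\tilde{s}] \mapsto [\tilde{s}\tilde{z}]$ at the level of Lusztig series, and the heart of the argument is that this same element also controls the Galois action on the Deligne--Lusztig virtual characters indexed by $\tilde{s}$---a compatibility that is implicit in the cited computation but which must be extracted and applied carefully rather than merely invoked.
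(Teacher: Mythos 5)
Your argument is correct and follows essentially the same route as the paper's: derive $\widetilde{\chi}^{\gamma} = \widetilde{\chi} \otimes \theta_{\tilde{z}}$ from hypothesis $(\star)$ by transporting the twist across the Deligne--Lusztig inner products (using rationality of $\langle\,\cdot\,,\,\cdot\,\rangle_{\widetilde{G}}$ and that $\gamma$ commutes with complex conjugation), then restrict to $G$. The key compatibility you flag at the end as ``the heart of the argument'' is exactly what the paper records as \cref{eq:sigma-RTG}, namely $R_{\widetilde{\bT}^{\star}}^{\widetilde{\bG}}(\tilde{s})^{\gamma} = R_{\widetilde{\bT}^{\star}}^{\widetilde{\bG}}(\tilde{s}\tilde{z})$ for all $(\widetilde{\bT}^{\star},\tilde{s}) \in \mathcal{S}(\widetilde{\bG}^{\star},F^{\star})$, attributed there to \cite[3.4]{schaeffer-fry-taylor:2017:on-self-normalising-sylow-2-subgroups} and \cite[11.5(b)]{bonnafe:2006:sln}.
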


\begin{proof}
The proof of \cite[3.4]{schaeffer-fry-taylor:2017:on-self-normalising-sylow-2-subgroups} together with \cite[11.5(b)]{bonnafe:2006:sln} shows that
%%%%
\begin{equation}\label{eq:sigma-RTG}
R_{\widetilde{\bT}^{\star}}^{\widetilde{\bG}}(\tilde{s})^{\gamma} = R_{\widetilde{\bT}^{\star}}^{\widetilde{\bG}}(\tilde{t}) = R_{\widetilde{\bT}^{\star}}^{\widetilde{\bG}}(\tilde{s}\tilde{z}) = R_{\widetilde{\bT}^{\star}}^{\widetilde{\bG}}(\tilde{s}) \otimes \theta_{\tilde{z}}
\end{equation}
%%%%
for any $(\widetilde{\bT}^{\star},\tilde{s}) \in \mathcal{S}(\widetilde{\bG}^{\star},F^{\star})$. Now, this implies that
%%%%
\begin{equation*}
\langle \widetilde{\chi},R_{\widetilde{\bT}^{\star}}^{\widetilde{\bG}}(\tilde{s}) \rangle = \langle \widetilde{\chi}^{\gamma},R_{\widetilde{\bT}^{\star}}^{\widetilde{\bG}}(\tilde{s})^{\gamma}\rangle = \langle \widetilde{\chi}^{\gamma}\otimes\theta_{\tilde{z}}^{-1}, R_{\widetilde{\bT}^{\star}}^{\widetilde{\bG}}(\tilde{s})\rangle
\end{equation*}
%%%%
for any $(\widetilde{\bT}^{\star},\tilde{s}) \in \mathcal{S}(\widetilde{\bG}^{\star},F^{\star})$. By assumption ($\star$) we thus have $\widetilde{\chi}^{\gamma} = \widetilde{\chi} \otimes \theta_{\tilde{z}}$. In particular, we must have $\Res_{G}^{\widetilde{G}}(\widetilde{\chi}^{\sigma}) = \Res_{G}^{\widetilde{G}}(\widetilde{\chi})$, which implies $\chi$ and $\chi^{\sigma}$ are both constituents of $\Res_{G}^{\widetilde{G}}(\widetilde{\chi})$.
\end{proof}

\begin{rem}
It has been shown by Digne--Michel in \cite[6.3]{digne-michel:1990:lusztigs-parametrization} that condition ($\star$) in \cref{prop:extend-fixed} is satisfied in all but a few extreme cases. We also note that \cref{prop:extend-fixed} is also true if $\gamma$ is taken to be an automorphism of $\widetilde{G}$ stabilising $G$.
\end{rem}

\section{Characteristic \texorpdfstring{$2$}{2}}\label{sec:char-2}
\begin{assumption}
In this section we assume $\bG$ is simple and simply connected and $p = 2$.
\end{assumption}

\begin{pa}
Let $\bT_0 \leqslant \bB_0 \leqslant \bG$ be a fixed maximal torus and Borel subgroup of $\bG$. We set $\bU_0 = R_u(\bB_0)$ to be the unipotent radical of the Borel. If $\Phi$ are the roots of $\bG$ with respect to $\bT_0$ then for each $\alpha \in \Phi$ we assumed chosen a closed embedding $x_{\alpha} : \mathbb{K}^+ \to \bG$ such that
%%%%
\begin{equation*}
tx_{\alpha}(c)t^{-1} = x_{\alpha}(\alpha(t)c)
\end{equation*}
%%%%
for all $t \in \bT_0$ and $c \in \mathbb{K}^+$. If $\Delta \subseteq \Phi$ are the simple roots determined by $\bB_0$ then the set $\{x_{\alpha}(c) \mid \alpha \in \Delta$, $c \in \mathbb{K}^+\}$ generates $\bG$.
\end{pa}

\begin{pa}
Recall that we have a standard Frobenius endomorphism $F_2 : \mathbb{K} \to \mathbb{K}$ given by $F_2(c) = c^2$. We assume fixed a Frobenius endomorphism $F_2 : \bG \to \bG$ such that $F_2(t) = t^2$ for all $t \in \bT_0$. After possibly composing with an inner automorphism of $\bG$ we may (and will) assume that $F_2\circ x_{\alpha} = x_{\alpha}\circ F_2$ for all $\alpha \in \Phi$. Note the groups $\bU_0$ and $\bB_0$ are $F_2$-stable. More generally, if $r = 2^a$ with $a \geqslant 1$ we denote by $F_r$ the $a$-fold composition $F_2\circ\dots\circ F_2$. A graph automorphism of $\bG$ is a finite order automorphism $\tau : \bG \to \bG$ such that $\bT_0$ and $\bB_0$ are $\tau$-stable and $\tau \circ x_{\alpha} = x_{b(\alpha)}$ for some bijection $b : \Delta \to \Delta$. Note that such an automorphism commutes with $F_2$. We will assume that $F = F_q\circ\tau = \tau\circ F_q$ for some (possibly trivial) graph automorphism $\tau$ and some integral power $q$ of $2$. Note $\bT_0$, $\bU_0$, and $\bB_0$ are $F$-stable and we set $T_0 = \bT_0^F$, $U_0 = \bU_0^F$, and $B_0 = \bB_0^F$. We say $F$ is split if $F = F_q$ and twisted otherwise.
\end{pa}

\begin{pa}
If the fixed point group $G = \bG^F$ is perfect then the quotient $S = G/Z$, where $Z := Z(G)$, is a finite simple group of Lie type in characteristic $2$, see \cite[24.14]{malle-testerman:2011:linear-algebraic-groups}. By \cite[2.5.1, 2.5.14]{gorenstein-lyons-solomon:1998:classification-3} we have
%%%%
\begin{equation*}
\Aut(S) \cong \Aut(G) \cong \widetilde{G}/Z(\widetilde{G}) \rtimes D
\end{equation*}
%%%%
where $D = \langle F_2, \tau_0,\tau_1\rangle \leqslant \Aut(G)$ is the subgroup generated by field and graph automorphisms. Here we have $\tau_0$ and $\tau_1$ are (possibly trivial) graph automorphisms such that $\tau_0^2 = \tau_1^3 = 1$. Note that if $F$ is twisted then $\tau_0 = \tau_1$ is the identity. In what follows we will denote by $\widetilde{S}$ the group $\widetilde{G}/Z(\widetilde{G})$ and we will also identify $S$ as a subgroup of $\widetilde{S}$.
\end{pa}

\begin{lem}\label{lem:2-sylow-GF-char-2}
Assume $G$ is perfect then we have $U_0 \leqslant G$ is a Sylow $2$-subgroup, $N_G(U_0) = B_0 = U_0 \rtimes T_0$, and $C_G(U_0) = Z(U_0)Z$. Moreover, $N_G(U_0)$ has no normal $2$-complement unless $F = F_2$, in which case $N_G(U_0) = U_0Z$ or equivalently $T_0 = Z$.
\end{lem}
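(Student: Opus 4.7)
The plan is to verify the four assertions in turn, leaning on the structure theory of finite groups of Lie type in the defining characteristic.

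The first two claims---that $U_0 \in \Syl_2(G)$ and $N_G(U_0) = B_0 = U_0 \rtimes T_0$---are standard. The identity $|U_0| = q^{|\Phi^+|}$ matches the full $2$-part of $|G|$ via the standard order formula for $|G|$. Since $\bB_0 = N_{\bG}(\bU_0)$ at the algebraic level, one has $B_0 \subseteq N_G(U_0)$; equality follows from the Bruhat decomposition, as any element outside $B_0$ lies in a double coset $B_0 n_w B_0$ with $w \neq 1$ and therefore cannot normalise $U_0$. The semidirect product structure is inherited from $\bB_0 = \bU_0 \rtimes \bT_0$ by taking $F$-fixed points.

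The core technical step I would isolate is the following: \emph{an element $t \in T_0$ acts trivially on $U_0/[U_0, U_0]$ if and only if $t \in Z$.} The nontrivial direction uses Lang's theorem applied to the connected unipotent commutator $[\bU_0, \bU_0]$, which yields $U_0/[U_0, U_0] \cong (\bU_0^{\mathrm{ab}})^F$, where $\bU_0^{\mathrm{ab}} = \bigoplus_{\alpha \in \Delta}\bU_\alpha$ and $T_0$ acts on the $\alpha$-summand by the character $\alpha$. For each $F$-orbit $O \subseteq \Delta$ of size $d$, the $F$-fixed subspace is parameterised by $v \in \mathbb{F}_{q^d}$ (with $v_{\tau^i\alpha} = v^{q^i}$), and $t$ acts on it by multiplication by $\alpha(t) \in \mathbb{F}_{q^d}^*$. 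Hence the action is trivial on $U_0^{\mathrm{ab}}$ iff $\alpha(t) = 1$ for all $\alpha \in \Delta$; since $\bigcap_{\alpha\in\Delta}\ker(\alpha) = Z(\bG)$ for simply connected $\bG$, this places $t$ in $Z(\bG) \cap T_0 = Z$, using the hypothesis that $G$ is perfect to identify $Z(G) = Z(\bG)^F$.

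With this key step in hand the remaining parts are immediate. The inclusion $Z(U_0) \cdot Z \subseteq C_G(U_0)$ is trivial; conversely, any $g \in C_G(U_0) \subseteq N_G(U_0) = U_0 \rtimes T_0$ decomposes as $g = ut$, and unpacking the centralisation condition shows the $T_0$-component $t$ induces the same conjugation on $U_0$ as the inner automorphism by $u^{-1}$, so $t$ acts trivially on $U_0^{\mathrm{ab}}$. By the key step $t \in Z$, whence $u = gt^{-1} \in C_{U_0}(U_0) = Z(U_0)$. For the final statement, $N_G(U_0) = U_0 \rtimes T_0$ has a normal $2$-complement iff $T_0 \lhd B_0$ iff $[U_0, T_0] = 1$, which by the key step is equivalent to $T_0 = Z$. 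If $F = F_2$ then $F(t) = t^2 = t$ forces $\bT_0^F = \{1\}$, so $T_0 = Z = \{1\}$; conversely, for $F \neq F_2$ the Steinberg order formula for $|\bT_0^F|$ yields either $(q-1)^r$ with $q \geqslant 4$ in the split case or at least a factor $q+1 \geqslant 3$ arising from a nontrivial $\tau$-orbit in the twisted case, and in either situation $|T_0|$ strictly exceeds the bounded integer $|Z(\bG)| \geqslant |Z|$, ruling out $T_0 = Z$.

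The main obstacle is executing the key step cleanly across all twisted types, especially triality in $\tw{3}{D_4}$, where one must carefully identify $U_0/[U_0, U_0]$ with $(\bU_0^{\mathrm{ab}})^F$ and analyse the $F$-orbit decomposition of the latter; once this identification is in hand, the rest reduces to routine verification.
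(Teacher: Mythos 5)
Your treatment of the first three assertions (Sylow, normaliser, centraliser) spells out a proof of what the paper simply cites to \cite[2.3.4(d), 2.6.5]{gorenstein-lyons-solomon:1998:classification-3}; the argument via the $T_0$-action on $U_0/[U_0,U_0]$ and Lang's theorem is a reasonable way to do this, modulo the identification $[U_0, U_0] = [\bU_0, \bU_0]^F$ that you would still need to justify. The substantive content of the paper's proof is the order comparison showing $T_0 \neq Z$ whenever $F \neq F_2$, and there your argument has a concrete gap.

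For the twisted case you assert that $|T_0|$ acquires ``at least a factor $q+1 \geqslant 3$'' from a nontrivial $\tau$-orbit and deduce that $|T_0|$ strictly exceeds $|Z(\bG)| \geqslant |Z|$. This fails when $q = 2$: for $G = \SU_3(2)$ (type $\A_2^-$, so $F \neq F_2$) one has $|T_0| = q^2 - 1 = 3$ and $|Z| = \gcd(3, q+1) = 3$, hence $T_0 = Z$ and your strict inequality is false. The lemma survives because $\SU_3(2)$ is solvable, hence not perfect, and is therefore excluded by the standing hypothesis --- but your proof never invokes this. The paper's proof carries out the comparison explicitly using order formulas for $T_0$ and $Z$ in each type with nontrivial centre, isolates the two borderline cases at $q=2$ (namely $\SU_4(2)$ and $\SU_3(2)$), verifies $\SU_4(2)$ directly, and dismisses $\SU_3(2)$ by solvability. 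Your proposal must perform the same case check; note also that the phrase ``bounded integer $|Z(\bG)|$'' is misleading since $|Z(\bG)|$ grows with $n$ in type $\A_{n-1}$ --- it is only bounded independently of $q$, which is exactly why the estimate is delicate when $q$ is small.
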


\begin{proof}
The statements about $U_0$, $N_G(U_0)$, and $C_G(U_0)$ are well known, see \cite[2.3.4(d), 2.6.5]{gorenstein-lyons-solomon:1998:classification-3}. It follows that $N_{G}(U_0)$ has a normal $2$-complement only when $T = Z$. If $F = F_2$ is split then $T_0$, hence also $Z$, is trivial so $N_G(U_0) = U_0Z = U_0$ certainly has a normal $2$-complement.

Now assume $F \neq F_2$. By \cite[3.6.7]{carter:1993:finite-groups-of-lie-type} we have $T \neq \{1\}$ but $Z$ is trivial unless $G$ is of type $\A_{n-1}^{\pm}(q)$ or $\E_6^{\pm}(q)$, see \cite[Table 24.2]{malle-testerman:2011:linear-algebraic-groups}. In the case of $\E_6(q)$ we have $|Z| = \gcd(3,q-1) \leqslant 3 < 3^6 \leqslant (q-1)^6 = |T_0|$. In the case of $\E_6^-(q)$ we have $|Z| = \gcd(3,q+1) \leqslant 3 < 3^2 \leqslant (q-1)^4(q+1)^2 = |T_0|$ so $T_0 \neq Z$. In the case of $\A_{n-1}(q)$ we have $|Z| = \gcd(n,q-1) \leqslant n < 3^{n-1} \leqslant (q-1)^{n-1} = |T_0|$. Hence, in these cases we have $T_0 \neq Z$.

Finally consider the case of $\A_{n-1}^-(q)$. Let us write $n-1$ as $2k+\delta$, where $k = \lfloor (n-1)/2 \rfloor$ and $\delta \in \{0,1\}$. If $q > 2$ then we have $|Z| = \gcd(n,q+1) \leqslant n < 3^{n-1} \leqslant (q-1)^{k+\delta}(q+1)^k = |T_0|$. If $q = 2$ then we have $|Z| = \gcd(n,3) \leqslant 3 \leqslant 3^k \leqslant (q-1)^{k+\delta}(q+1)^k = |T_0|$. Hence $T_0 \neq Z$ unless $k=1$ so $G$ is either $\SU_4(2)$ or $\SU_3(2)$. In the first case we have $|Z| = 1 < 3 = |T_0|$ and in the second case we have $\SU_3(2)$ is solvable.
\end{proof}

\begin{pa}\label{pa:sylow-setup}
As $U_0$ is a Sylow $2$-subgroup of $G$ and $Z$ is a $2'$-group we have $P = U_0Z/Z$ is a Sylow $2$-subgroup of $S = G/Z$. The quotient $\widetilde{G}/G \cong \widetilde{S}/S$ has odd order so $P$ is a Sylow $2$-subgroup of $\widetilde{S}$. It's clear that $P$ is $D$-invariant so if $D_2 \leqslant D$ is a Sylow $2$-subgroup of $D$ then $P\rtimes D_2$ is a Sylow $2$-subgroup of $\widetilde{S}\rtimes D$. Now, assume $S \leqslant A \leqslant \widetilde{S}\rtimes D$ is an almost simple group whose quotient $A/S$ is a $2$-group. Up to conjugacy we can then assume that $A = S\rtimes Q_0$ with $Q_0 \leqslant D_2$ a $2$-subgroup. The group $Q := P\rtimes Q_0 \leqslant S\rtimes Q_0$ is a Sylow $2$-subgroup of $A$ and $P = S \cap Q$.
\end{pa}

\begin{lem}\label{lem:norm-sylow-char-2}
Assume $G$ is perfect so that $S = G/Z$ is a finite non-abelian simple group. If $A = S \rtimes Q_0$ is an almost simple group with $Q_0 \leqslant D$ a $2$-group then $Q := P \rtimes Q_0 \in \Syl_2(G)$ and we have $N_A(Q) = Q$ if and only if $F_2 \in Q_0$. If $N_A(Q) \neq Q$ then $N_A(Q)$ has no normal $2$-complement.
\end{lem}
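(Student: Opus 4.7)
My plan is to compute $N_A(Q)$ via the chain $Q \leqslant N_A(Q) \leqslant N_A(P)$ and read off the stated conditions. First, $Q \in \Syl_2(A)$ follows from the index computation $[A:Q] = [S:P]$, which is odd since $P$ is a Sylow $2$-subgroup of $S$. Since $S$ is characteristic in $A$, the subgroup $P = Q \cap S$ is preserved by $N_A(Q)$, so $N_A(Q) \leqslant N_A(P)$. Using that $Q_0 \leqslant D$ normalises $\bU_0$ and $Z$, together with \cref{lem:2-sylow-GF-char-2}, I have $N_A(P) = N_S(P) \rtimes Q_0 = (P \rtimes T) \rtimes Q_0$, where $T := T_0/Z$ is abelian of odd order (the cyclotomic values $\Phi_d(q)$ at $q = 2^a$ are all odd in characteristic $2$).

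Quotienting by $P$ reduces to computing the normaliser of $Q/P = Q_0$ inside $N_A(P)/P = T \rtimes Q_0$. A direct semidirect-product calculation shows that $tq_0 \in T \rtimes Q_0$ normalises $Q_0$ if and only if $t \in C_T(Q_0)$, giving $N_{T \rtimes Q_0}(Q_0) = C_T(Q_0) \rtimes Q_0$. Lifting back yields $N_A(Q) = Q \cdot C_T(Q_0)$ with $Q \cap C_T(Q_0) = 1$, and hence $N_A(Q) = Q$ if and only if $C_T(Q_0) = 1$.

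For the ``$F_2 \in Q_0$ implies $N_A(Q) = Q$'' direction: $F_2$ acts on $\bT_0$ as the squaring map $t \mapsto t^2$, and on the odd-order group $T_0$ only $t = 1$ satisfies $t^2 = t$, so $C_{T_0}(F_2) = 1$. Since $|Q_0|$ and $|T_0|$ are coprime, taking $Q_0$-fixed points of $1 \to Z \to T_0 \to T \to 1$ is exact, so $C_T(F_2) = 1$ and hence $C_T(Q_0) = 1$. For the converse, \cref{lem:2-sylow-GF-char-2} lets me reduce to $F \neq F_2$ (so $T \neq 1$), and I then claim that when $F_2 \notin Q_0$ the smallest positive $m$ with $F_2^m \in \langle F, Q_0\rangle$ satisfies $m \geqslant 2$; the resulting fixed subtorus $\bT_0^{F_2^m} \cong (\mathbb{F}_{2^m}^\times)^r$ has order $(2^m-1)^r$, which strictly exceeds $|Z|$ in all relevant Lie types, producing non-trivial $Q_0$-fixed points in $T$ via the coprime-action exact sequence.

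Finally, if $N_A(Q) \neq Q$, set $V := C_T(Q_0) \neq 1$; this is a non-trivial Hall $2'$-subgroup of $N_A(Q) = QV$, and any normal $2$-complement would have to equal $V$. But if $V \lhd N_A(Q)$ then $P$ would normalise $V$, and inside the semidirect product $P \rtimes T$ this forces $V \leqslant C_T(P)$; since $C_S(P) = Z(P) \leqslant P$ by \cref{lem:2-sylow-GF-char-2}, we have $C_T(P) = 1$, giving $V = 1$, a contradiction. The main obstacle is the converse direction of the biconditional: establishing $C_T(Q_0) \neq 1$ when $F_2 \notin Q_0$ requires a case-by-case verification, particularly to handle the interaction of graph automorphisms with powers of $F_2$ in the twisted Lie types and situations where graph automorphisms may act trivially on $T$.
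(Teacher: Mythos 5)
Your reduction of the statement to the claim ``$N_A(Q)=Q$ if and only if $C_{T_0/Z}(Q_0)=1$'' is correct and essentially matches the paper's, which instead invokes a lemma of Navarro--Tiep--Turull to get $N_A(Q)/Q\cong C_{N_G(U_0)/U_0Z}(Q_0)\cong C_{T_0/Z}(Q_0)$. Your forward direction ($F_2\in Q_0$ forces $C_{T_0/Z}(Q_0)=1$) is also sound, though the coprime-action/cohomology argument is more than is needed: the paper observes directly that $tZ=F_2(tZ)=t^2Z$ forces $t\in Z$, with no reference to $H^1$. For the final part, the paper simply applies \cref{lem:norm-2-comp-no-exist} (with $C_S(P)=Z(P)$ coming from \cref{lem:2-sylow-GF-char-2}), which is cleaner than your ad hoc argument identifying $V$ with $C_T(Q_0)$ and invoking $C_T(P)=1$.

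The converse direction, however, contains a genuine gap, and it is the heart of the lemma. You want to produce a nontrivial element of $C_{T_0/Z}(Q_0)$, i.e.\ an element of $T_0^{Q_0}=\bT_0^{\langle F,Q_0\rangle}$ lying outside $Z$. Your proposed lower bound comes from $\bT_0^{F_2^m}$ where $m$ is minimal with $F_2^m\in\langle F,Q_0\rangle$. But $F_2^m\in\langle F,Q_0\rangle$ gives the \emph{containment} $\bT_0^{\langle F,Q_0\rangle}\subseteq\bT_0^{F_2^m}$, so the inequality runs the wrong way: computing $|\bT_0^{F_2^m}|=(2^m-1)^r$ bounds $|T_0^{Q_0}|$ from \emph{above}, not below. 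In particular, the subsequent claim that ``$|\bT_0^{F_2^m}|>|Z|$ produces nontrivial $Q_0$-fixed points in $T$'' does not follow from the coprime-action exact sequence or anything else. The group $T_0^{Q_0}$ is cut out by the graph part and the specific power $F_q\tau$ of $F$, and no pure order count of a larger torus controls it. The paper instead writes down explicit cocharacter embeddings $t_{r,\gamma}:\K^\times\to\bT_0$ and $s_0:\K^\times\to\bT_0$ and plugs in suitable roots of unity $\eta_0$ to land genuinely in $\bT_0^{\langle F,Q_0\rangle}\setminus Z$; this requires separate arguments for ${}^3\D_4(q)$, for $m>1$ versus $m=1$, for split versus twisted $F$, for whether $\tau_0$ is trivial, and a specific check that the chosen element escapes $Z$ (with $\SU_3(2)$ excluded and a noted caveat for $\SL_3$ with $r=2$). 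Your final sentence correctly anticipates that ``case-by-case verification'' is needed, but the sketched counting argument cannot be repaired into that verification, because the estimate it produces is of the wrong kind.

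Separately, your intermediate claim that $m\geqslant 2$ whenever $F_2\notin Q_0$ and $F\neq F_2$ is plausible but unproved, and even granting it, the order $(2^m-1)^r$ is the order of a torus you do not get to keep.
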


\begin{rem}
Note that if $F = F_2$ then trivially we have $F_2 \in Q_0$ for any subgroup $Q_0 \leqslant D_2$ because $F_2$ is the identity. Hence, if $F = F_2$ then we have $N_A(Q) = Q$ for any almost simple group $A = S \rtimes Q_0$, which follows from \cref{lem:2-sylow-GF-char-2}.
\end{rem}

\begin{proof}[of \cref{lem:norm-sylow-char-2}]
By \cite[Lemma 2.1]{navarro-tiep-turull:2007:p-rational-characters} we have $N_A(Q)/Q \cong C_{N_G(U_0)/U_0Z}(Q_0)$. As we assume $Q_0 \leqslant D_2$ we have the natural map $T_0/Z \to N_G(U_0)/U_0Z$ is a $Q_0$-equivariant isomorphism, hence $C_{N_G(U_0)/U_0Z}(Q_0) \cong C_{T_0/Z}(Q_0)$. Therefore, we have $N_A(Q) = Q$ is self-normalising if and only if $C_{T_0/Z}(Q_0) = \{1\}$. Furthermore, by \cref{lem:norm-2-comp-no-exist} if $C_{T_0/Z}(Q_0) \neq \{1\}$ then $N_A(Q)$ has no normal $2$-complement.

One direction of the statement is clear. If $tZ \in T_0/Z$ is such that $tZ = F_2(tZ) = t^2Z$ then clearly $t \in Z$ so $(T_0/Z)^{F_2} = \{1\}$. Hence, if $F_2 \in Q_0$ then $C_{T_0/Z}(Q_0) = \{1\}$.

We now prove the other direction. Assume $\widecheck{\Delta} = \{\widecheck{\alpha}_1,\dots,\widecheck{\alpha}_n\}$ are the simple coroots. As $\bG$ is simply connected and $p=2$ we have a bijective homomorphism of algebraic groups $(\mathbb{K}^{\times})^n \to \bT_0$ defined by $(\zeta_1,\dots,\zeta_n) \mapsto \widecheck{\alpha}_1(\zeta_1)\cdots\widecheck{\alpha}_n(\zeta_n)$. Assume $\gamma \in D$ is a graph automorphism of order $e \geqslant 1$. Recall that $\gamma$ permutes the set of simple coroots $\widecheck{\Delta}$. We assume fixed a simple coroot $\widecheck{\alpha} \in \widecheck{\Delta}$ whose $\gamma$-orbit has length $e$.

If $r$ is an integral power of $2$ then we define an embedding $t_{r,\gamma} : \mathbb{K}^{\times} \to \bT_0$ by setting
%%%%
\begin{equation*}
t_{r,\gamma}(\eta) = \widecheck{\alpha}(\eta) \cdot \gamma(\widecheck{\alpha})(\eta^r)\cdots \gamma^{e-1}(\widecheck{\alpha})(\eta^{r^{e-1}}).
\end{equation*}
%%%%
We note that if $\eta \in \mathbb{F}_{r^e}$ then $t_{r,\gamma}(\eta)$ is fixed by $F_r\circ\gamma$. Now, $Z \neq \{1\}$ only when $\bG$ is of type $\A_n$ or $\E_6$ because $p = 2$. If $\bG$ is of type $\A_n$ or $\E_6$ then the centre $Z(\bG)$ is described in terms of cocharacters in \cite[1.5.6]{geck-malle:2016:reductive-groups-and-steinberg-maps}. From this description we see that we may choose the cocharacter $\widecheck{\alpha}$ such that $t_{r,\gamma}(\eta) \in Z(\bG)$ if and only if $\eta = 1$ unless $\bG = \SL_3(\mathbb{K})$, $r=2$, and $\gamma$ is non-trivial. We will assume $\widecheck{\alpha}$ is chosen with this property.

We also define an embedding $s_0 : \mathbb{K}^{\times} \to \bT_0$ by setting
%%%%
\begin{equation*}
s_0(\eta) = \widecheck{\alpha}_1(\eta)\cdots\widecheck{\alpha}_n(\eta).
\end{equation*}
%%%%
With this embedding we have $\tau_0(s_0(\eta)) = s_0(\eta)$ and $F_2(s_0(\eta)) = s_0(\eta^2)$ so if $\eta \in \mathbb{F}_q$ then $s_0(\eta)$ is fixed by $F_q$. One easily sees using the above mentioned descriptions of $Z(\bG)$ that $s_0(\eta) \in Z(\bG)$ if and only if $\eta = 1$.

Let $d \geqslant 1$ denote the order of $\tau$ and write $\bar{q} = q^d = 2^{2^tm}$ with $t \geqslant 0$ and $m \geqslant 1$ odd. With this notation we have that $D_2 = \langle F_2^m, \tau_0\rangle$. We will deal with the case $G = {}^3\D_4(q)$ first. As $2^{m-1}$ divides $q^d-1$, there exists a non-trivial root of unity $\eta_0 \in \mathbb{F}_{\bar{q}}^{\times}$ such that $\eta_0^{2^m-1} = 1$. The element $t_{q,\tau}(\eta_0)Z \in T_0/Z$ is a non-trivial $D_2$-invariant element. By \cref{lem:2-sylow-GF-char-2} we have $T_0 \neq Z$ so we must have $C_{T_0/Z}(Q_0) \neq \{1\}$.

We assume now that $G \neq {}^3\D_4(q)$ so $d \in \{1,2\}$. Assume $m > 1$. The term $2^m-1$ divides $q-1$ because we must have $q = 2^{2^{t+1-d}m}$. Therefore, there exists a non-trivial root of unity $\eta_0 \in \mathbb{F}_{{q}}^{\times}$ such that $\eta_0^{2^m-1} = 1$. The element $s_0(\eta_0)Z$ is non-trivial and $D_2$-invariant, so we're done in this case.

Now assume $m=1$. If $F$ is twisted then we have $\bar{q} \geqslant 4$ and there exists a non-trivial $3$rd root of unity $\eta_0 \in \mathbb{F}_{\bar{q}}^{\times}$. Moreover, we have $D_2 = \langle F_2\rangle$. The element $t_{q,\tau}(\eta_0)$ is $\langle F_2^2\rangle$-invariant and $t_{q,\tau}(\eta_0) \not\in Z$ by the above remarks because we assume $G \neq \SU_3(2)$. Hence, we have $t_{q,\tau}(\eta_0)Z \in C_{T_0/Z}(Q_0) \neq \{1\}$ assuming $F_2 \not\in Q_0$ because $Q_0 \leqslant \langle F_2^2\rangle$.

Now we assume that $F$ is split so $q = \bar{q}$. Assuming $q > 2$ there exists a non-trivial $3$rd root of unity $\eta_0 \in \mathbb{F}_q^{\times}$ and the element $s_0(\eta_0)$ is $\langle F_2^2,\tau_0\rangle$-invariant. Assume $\tau_0$ is not the identity then $\tau_0$ has order $2$ because $G \neq {}^3\D_4(q)$. The element $t_{2,\tau_0}(\eta_0)$ is $\langle F_2^2,F_2\circ\tau_0\rangle$-invariant. Note that this element is contained in $T_0$ because $F_q(t_{\tau_0}(\eta_0)) = t_{\tau_0}(\eta_0^q) = t_{\tau_0}(\eta_0)$ and, moreover, we have $t_{\tau_0}(\eta_0) \not\in Z$. If $F_2 \not\in Q_0$ then $Q_0$ is a subgroup of $\langle F_2^2,\tau_0\rangle$ or $\langle F_2^2,F_2\circ\tau_0\rangle$ so we must have $C_{T_0/Z}(Q_0) \neq \{1\}$. Finally, if $q = 2$ then $F = F_2$ so $T_0 = Z$, by \cref{lem:2-sylow-GF-char-2}, hence we have $C_{T_0/Z}(Q_0) = \{1\}$. Trivially we have $F_2 \in Q_0$ because $F_2$ is the identity in this case. This completes the proof.
\end{proof}

\begin{prop}
Assume $\bG$ is simple and simply connected of arbitrary type and $p = 2$. If $G$ is perfect, so that $S = G/Z$ is a non-abelian simple group, and $S \leqslant A \leqslant \Aut(S)$ is an almost simple group with $A/S$ a $2$-group then $A$ satisfies \cref{conj:main}.
\end{prop}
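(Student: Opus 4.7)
The plan is to turn the proposition into a pure statement about odd-degree characters of $B_0(S)$ and then invoke previously-established SN2S-goodness of $S$. Throughout, use the setup of \cref{pa:sylow-setup}: $A = S \rtimes Q_0$ with $Q_0 \leqslant D_2$ and $Q = P \rtimes Q_0 \in \Syl_2(A)$.

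First, I would combine \cref{lem:norm-sylow-char-2} with \cref{lem:Brauer} to obtain that $B_0(N_A(Q))$ contains a unique irreducible Brauer character if and only if $F_2 \in Q_0$: when $F_2 \in Q_0$ the normaliser $N_A(Q) = Q$ is itself a $2$-group, and when $F_2 \notin Q_0$ the normaliser has no normal $2$-complement. On the character-theoretic side, since $A/S$ is a $2$-group, \cref{lem:2powerrestriction} identifies $\Irr_{2'}(B_0(A))$ (via restriction) with the set of $Q_0$-invariant characters in $\Irr_{2'}(B_0(S))$, preserving $\sigma$-fixedness. Thus \cref{conj:main} for $A$ is equivalent to the statement: every $Q_0$-invariant character of $\Irr_{2'}(B_0(S))$ is $\sigma$-fixed if and only if $F_2 \in Q_0$.

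Next I would establish the block-theoretic input $\Irr_{2'}(S) = \Irr_{2'}(B_0(S))$, which upgrades the equivalence above to a statement about \emph{all} odd-degree characters of $S$. As noted in the discussion following \cref{lem:norm-sylow-char-2}, in characteristic $2$ the centre $Z = Z(G)$ has odd order for every simply connected type, so $\Irr(S)$ consists of precisely those $\chi \in \Irr(G)$ with $Z \leqslant \Ker(\chi)$, and block membership passes transparently between $G$ and $S$. Hence it is enough to show $\Irr_{2'}(G) = \Irr_{2'}(B_0(G))$. This should follow in defining characteristic by the same strategy used in \cref{prop:reducetoSN2Sgoodness} for $\Sp_{2n}(q)$: combine the explicit parametrisation of odd-degree characters by semisimple classes (as in \cite{malle-spaeth:2016:characters-of-odd-degree}) with the observation that in defining characteristic a character of non-principal block has defect zero, hence degree divisible by $|G|_2 = |U_0| > 1$, which excludes odd-degree characters.

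With both reductions in hand, the proof concludes by invoking SN2S-goodness of $S$, which for simple groups of Lie type in characteristic $2$ is established in \cite{schaeffer-fry-taylor:2017:on-self-normalising-sylow-2-subgroups}. Unwinding the definition of SN2S-good through \cref{lem:norm-sylow-char-2}, this is exactly the assertion that every $Q_0$-invariant $\chi' \in \Irr_{2'}(S)$ is $\sigma$-fixed if and only if $F_2 \in Q_0$, matching the equivalence required.

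The main obstacle I anticipate is the middle step: confirming $\Irr_{2'}(G) = \Irr_{2'}(B_0(G))$ for $G$ of arbitrary simply-connected type in defining characteristic $2$. The non-defining analogue used for symplectic groups invokes specific structural lemmas about $2$-blocks, and the defining-characteristic version, while morally simpler (defect-zero characters dominate the non-principal blocks), needs to be stated and justified carefully for each Lie type — in particular handling the small-rank and twisted cases where the list of odd-degree characters is more constrained. Everything else in the argument is a bookkeeping combination of the section's lemmas with previous results.
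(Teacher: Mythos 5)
The overall architecture of your argument — reduce to a statement about $Q_0$-invariant odd-degree characters of $S$ via \cref{lem:2powerrestriction}, plug in the characteristic-$2$ block fact $\Irr_{2'}(S) = \Irr_{2'}(B_0(S))$, then invoke prior results — is the right skeleton and matches the paper's strategy. There is, however, a genuine gap in the ``only if'' direction, and the block fact is asserted rather than pinned down.

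The gap: in the case $F_2 \notin Q_0$, \cref{lem:norm-sylow-char-2} tells you $N_A(Q)$ has no normal $2$-complement, so you must exhibit a character $\chi \in \Irr_{2'}(B_0(A))$ that is not $\sigma$-fixed — a character \emph{of $A$}, not of $S$. Producing a $Q_0$-invariant $\varphi \in \Irr_{2'}(S) = \Irr_{2'}(B_0(S))$ that is not $\sigma$-fixed does not finish the job: \cref{lem:2powerrestriction} only converts data about $\Irr_{2'}(B_0(A))$ into data about $\Irr_{2'}(B_0(S))_{A/S}$ via \emph{restriction}, and restriction from $A$ to $S$ need not be surjective onto the $Q_0$-invariant odd-degree characters. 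A $Q_0$-invariant $\varphi$ produces an element of $\Irr_{2'}(A)$ above it only if $\varphi$ actually \emph{extends} to $A$, and since $A/S$ is a $2$-group this is a nontrivial cohomological obstruction (without an extension, the minimal character of $A$ over $\varphi$ has degree divisible by $2$ and is useless). Your claim that SN2S-goodness plus \cref{lem:norm-sylow-char-2} ``is exactly the assertion'' you need therefore conflates a statement about characters of $S$ with one about characters of $A$. The paper closes this gap explicitly: it first arranges (via \cite[6.4]{schaeffer-fry-taylor:2017:on-self-normalising-sylow-2-subgroups}, and a separate argument for $q=2$ twisted) a $Q_0$-invariant, non-$\sigma$-fixed $\chi \in \Irr_{2'}(G)$ which has $Z$ in its kernel and \emph{extends to $\widetilde{G}$}, and then invokes \cite[3.4]{spaeth:2012:inductive-mckay-defining} to upgrade extendibility over $\widetilde{G}$ to extendibility over the inertia group in $G\rtimes D$, from which an extension over $A$ follows. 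That extension step is the missing idea.

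Two smaller points. First, your route to $\Irr_{2'}(G) = \Irr_{2'}(B_0(G))$ mixes the non-defining-characteristic ingredients used for $\PSp_{2m}(q)$ in \cref{prop:reducetoSN2Sgoodness} (Malle--Sp\"ath, Cabanes--Enguehard 21.14) with a defining-characteristic defect argument; what's actually needed is the defining-characteristic result of \cite[6.18]{cabanes-enguehard:2004:representation-theory-of-finite-reductive-groups}, which uses the strongly split $BN$-pair structure together with $C_S(P) = Z(P)$ to show every $2$-block of $S$ other than $B_0(S)$ has defect zero. This is the clean citation; you flag uncertainty here and rightly so. Second, your argument silently passes over the degenerate case $q = 2$ with $F$ twisted, where $D$ is trivial and $A = S$; the paper treats this separately by citing \cite{schaefferfry:2016:odd-degree-characters-and-self-normalizing-sylow-subgroups} directly.
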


\begin{proof}
The group $S$ has a strongly split $BN$-pair, in the sense of \cite[2.20]{cabanes-enguehard:2004:representation-theory-of-finite-reductive-groups}, and satisfies the hypothesis in \cite[6.14]{cabanes-enguehard:2004:representation-theory-of-finite-reductive-groups}. Indeed, $G$ satisfies this hypothesis by \cite[6.15]{cabanes-enguehard:2004:representation-theory-of-finite-reductive-groups} hence so does $S$ because it's a quotient of $G$ by a $2'$-group. According to \cite[6.18]{cabanes-enguehard:2004:representation-theory-of-finite-reductive-groups}, as $C_S(P) = Z(P)$, we have every $2$-block of $S$ is either the principal block or a block of defect zero. In particular, this implies that
%%%%
\begin{equation}\label{eq:princ-block-char-2}
\Irr_{2'}(S) = \Irr_{2'}(B_0(S))
\end{equation}
%%%%
because any irreducible character of $S$ with maximal defect is in the principal block.

We will write $A = S \rtimes Q_0$, with $Q_0 \leqslant D_2$ a $2$-group, and set $Q = P \rtimes Q_0$ a Sylow $2$-subgroup of $A$ as in \cref{pa:sylow-setup}. If $F_2 \in Q_0$ then we are in the case that $N_A(Q) = Q$ by \cref{lem:norm-sylow-char-2}. It follows from the proof of \cite[5.8]{schaeffer-fry-taylor:2017:on-self-normalising-sylow-2-subgroups} that every $Q_0$-invariant member of $\Irr_{2'}(S)$ is fixed by $\sigma$, so $A$ satisfies \cref{conj:main} by \cref{conj:ifstatement}. Now assume $F_2 \not\in Q_0$ then $N_A(Q)$ has no normal $2$-complement, c.f., \cref{lem:norm-sylow-char-2}. We must show that there exists an odd degree character $\chi \in \Irr_{2'}(B_0(A))$ which is not $\sigma$-fixed.

It suffices to find an $A$-invariant character $\chi \in \Irr_{2'}(S)$ which is not $\sigma$-fixed and extends to $A$. Indeed, if $\widetilde{\chi} \in \Irr_{2'}(A)$ is such an extension, then $\widetilde{\chi}$ is in the principal block of $A$ and is not $\sigma$-fixed, see \cref{eq:princ-block-char-2,lem:2powerrestriction}. If $q=2$ and $F$ is twisted then $D$ is trivial, so $A = S$ because $\widetilde{S}/S$ is odd. In this case the existence of an odd degree character of $S$ which is not $\sigma$-fixed was shown in \cite{schaefferfry:2016:odd-degree-characters-and-self-normalizing-sylow-subgroups}.

Now we can assume $q > 2$. In the proof of \cite[6.4]{schaeffer-fry-taylor:2017:on-self-normalising-sylow-2-subgroups} it is shown that there exists an odd-degree character $\chi \in \Irr_{2'}(G)$ with the following properties: $\chi$ extends to $\widetilde{G}$,  is $Q_0$-invariant, has $Z$ in its kernel, and is not $\sigma$-fixed. As $\chi$ extends to $\widetilde{G}$, it follows from \cite[3.4]{spaeth:2012:inductive-mckay-defining} that $\chi$ extends to its inertia group $G\rtimes D_{\chi}$ in the semidirect product $G\rtimes D$. Let $\widetilde{\chi} \in \Irr(G\rtimes D_{\chi})$ be such an extension. Then clearly $Z$ is in the kernel of $\widetilde{\chi}$ so we may view this as a character of $S\rtimes D_{\chi}$. As $Q_0 \leqslant D_{\chi}$ we have $\Res_A^{S\rtimes D_{\chi}}(\widetilde{\chi})$ is an extension of $\chi$ so $A$ satisfies \cref{conj:main}.
\end{proof}

\begin{pa}
We end this section by making some remarks on the work in \cite[\S5, \S6]{schaeffer-fry-taylor:2017:on-self-normalising-sylow-2-subgroups} concerning groups in characteristic $2$. For the following remarks we adopt the notation of \cite{schaeffer-fry-taylor:2017:on-self-normalising-sylow-2-subgroups}. Assuming $G$ is perfect the statement of \cite[5.7]{schaeffer-fry-taylor:2017:on-self-normalising-sylow-2-subgroups} is correct but there is insufficient detail in the proof. The statement we require (and use) is that $C_{N_G(P)/PZ}(Q) = \{1\}$ if and only if $F_2 \in \Gamma_Q(G)$ where $Z = Z(G)$. Arguing exactly as in the proof of \cite[5.7]{schaeffer-fry-taylor:2017:on-self-normalising-sylow-2-subgroups} we have $C_{N_G(P)/PZ}(Q) \cong C_{T_0/Z}(\Gamma_Q(G))$. The proof of \cref{lem:norm-sylow-char-2} now shows that $C_{T_0/Z}(\Gamma_Q(G)) = \{1\}$ if and only if $F_2 \in \Gamma_Q(G)$ giving the statement.
\end{pa}

\begin{pa}
In the proof of \cite[6.4]{schaeffer-fry-taylor:2017:on-self-normalising-sylow-2-subgroups} one has to be more careful. There one should really work with $\bar{q}$ instead of $q$ where $\bar{q}$ is defined as in the proof of \cref{lem:norm-sylow-char-2}. Replacing $q$ by $\bar{q}$ the argument given in \cite[6.4]{schaeffer-fry-taylor:2017:on-self-normalising-sylow-2-subgroups} works for twisted groups but needs to be modified for split groups as in the proof of \cref{lem:norm-sylow-char-2}. In any case, our desired statement is covered either by the arguments in \cite[6.4]{schaeffer-fry-taylor:2017:on-self-normalising-sylow-2-subgroups} or \cite{schaefferfry:2016:odd-degree-characters-and-self-normalizing-sylow-subgroups} except when $G$ is $\E_6(q)$. However, this case is easily dealt with using the same arguments. We omit the details.
\end{pa}

\section{Type \texorpdfstring{$\A_{n-1}$}{A(n-1)}}\label{sec:typeA}
\begin{assumption}
In this section we assume $\bG = \SL_n(\mathbb{K})$ and $p \neq 2$.
\end{assumption}

\begin{lem}\label{lem:type-A-char-inv}
If $s \in G^\star$ is a $2$-element, then $\chi^{\sigma} = \chi$ for all $\chi \in \mathcal{E}(G,[s])\cap \Irr_{2'}(G)$ unless $G = \SL_2(q)$ and $q \equiv \pm3 \pmod{8}$. In this latter case if $\chi \in \mathcal{E}(G,[s])\cap \Irr_{2'}(G)$ does not extend to $\widetilde{G}$, then it is not $\sigma$-fixed.
\end{lem}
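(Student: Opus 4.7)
The plan is to invoke \cref{prop:extend-fixed} with $\gamma=\sigma$, and then to handle the non-extension case by a separate argument. First I would verify the hypotheses of that proposition. Since $s$ is a $2$-element and $\sigma$ fixes $2$-power roots of unity, the $G^\star$-class $[s]$ is $\sigma$-stable and hence so is $\mathcal{E}(G,[s])$. Pick a preimage $\tilde s\in\widetilde{G}^\star$ of $s$ under $\iota^\star$; writing $\tilde s=\tilde s_2\tilde s_{2'}$, the $2'$-part $\tilde s_{2'}$ must lie in the central torus $\ker(\iota^\star)^{F^\star}$ (because its image under $\iota^\star$ is the $2'$-part of $s$, namely $1$), so replacing $\tilde s$ by $\tilde s_2$ we may assume $\tilde s$ itself is a $2$-element. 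Since $\widetilde{\bG}=\GL_n$ has connected centre, $C_{\widetilde{\bG}^\star}(\tilde s)$ is a Levi of $\widetilde{\bG}^\star$ and Jordan decomposition is a bijection, so by Digne--Michel~\cite[6.3]{digne-michel:1990:lusztigs-parametrization} condition $(\star)$ of \cref{prop:extend-fixed} holds for every $\widetilde\chi\in\mathcal{E}(\widetilde{G},[\tilde s])$.

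Taking $\widetilde\chi\in\Irr(\widetilde{G})$ to cover $\chi$, we have $\widetilde\chi\in\mathcal{E}(\widetilde{G},[\tilde s])$ by \cref{lem:res-preimage}. \cref{prop:extend-fixed} then gives that $\chi^\sigma$ is a constituent of $\Res_G^{\widetilde{G}}(\widetilde\chi)$; when $\chi$ extends to $\widetilde{G}$, so that $\chi$ is the only constituent, this immediately yields $\chi^\sigma=\chi$. What remains is the case where $\chi$ does not extend and the restriction splits into several $\widetilde{G}/G$-conjugates, where I still need to decide whether $\sigma$ keeps $\chi$ fixed or moves it within that orbit.

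This non-extension case is where I expect the main difficulty, and it is where the two branches of the lemma diverge. For $G=\SL_n(q)$ with $n\geq 3$, I would analyse the component group $A_{\bG^\star}(s)=C_{\bG^\star}(s)/C_{\bG^\star}(s)^\circ$ via the eigenvalue multiplicities of a preimage of $s$ in $\GL_n$, and combine this with the classification of odd-degree characters in \cite{navarro-tiep:2015:irreducible-representations-of-odd-degree} to show that each constituent of $\Res_G^{\widetilde{G}}(\widetilde\chi)$ is individually $\sigma$-fixed, so that $\chi^\sigma=\chi$ even when the restriction splits. For $G=\SL_2(q)$, the non-extending odd-degree characters are the classical exceptional characters whose field of values is $\mathbb{Q}(\sqrt{\varepsilon q})$ for the appropriate sign $\varepsilon\in\{\pm1\}$, arising from a quadratic Gauss sum. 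Using that $\sigma(\zeta_p)=\zeta_p^2$, a standard computation gives $\sigma(\sqrt{\varepsilon q})=\chi_q(2)\sqrt{\varepsilon q}$, where $\chi_q$ is the quadratic character of $\mathbb{F}_q^\times$; and $\chi_q(2)=1$ precisely when $q\equiv\pm 1\pmod 8$, so these exceptional characters fail to be $\sigma$-fixed exactly for $q\equiv\pm 3\pmod 8$, matching the stated exception.
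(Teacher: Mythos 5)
Your proposal follows the same route as the paper: invoke \cref{prop:extend-fixed} (after checking $\sigma$-stability of the series and condition ($\star$) via Digne--Michel), conclude immediately when $\chi$ extends, and handle the non-extending odd-degree characters by a separate case analysis, with $\SL_2(q)$ treated by hand. The paper cites \cite[10.2, \S10]{schaeffer-fry-taylor:2017:on-self-normalising-sylow-2-subgroups} for the $n=2^r$ classification and the $\sigma$-fixedness of the split constituents where you instead point to \cite{navarro-tiep:2015:irreducible-representations-of-odd-degree} and a direct component-group analysis, and you replace the paper's lookup in Bonnaf\'e's $\SL_2$ character table with the equivalent quadratic Gauss-sum computation of $\sigma(\sqrt{\varepsilon q})$; these are implementation choices within the same argument, not a different proof.
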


\begin{proof}
As $s$ is a $2$-element we have $\mathcal{E}(G,s)$ is $\sigma$-invariant by \cite[3.4]{schaeffer-fry-taylor:2017:on-self-normalising-sylow-2-subgroups}. Assume $\chi \in \mathcal{E}(G,[s])$ is an odd-degree irreducible character and let $\widetilde{\chi} \in \Irr(\widetilde{G})$ be a character covering $\chi$. By \cite[6.3]{digne-michel:1990:lusztigs-parametrization} the condition ($\star$) in \cref{prop:extend-fixed} is satisfied so we have $\widetilde{\chi}$ covers $\chi^{\sigma}$. Moreover, if $\chi$ extends to $\widetilde{G}$ then $\chi^{\sigma} = \chi$. It is shown in \cite[10.2]{schaeffer-fry-taylor:2017:on-self-normalising-sylow-2-subgroups} that if $\chi$ does not extend to $\widetilde{G}$ then $n = 2^r$, for some $r \geqslant 1$, and $\Res_{G}^{\widetilde{G}}(\widetilde{\chi}) = \chi + {}^g\chi$ for some $g \in \widetilde{G}$. If $r > 1$ then one can show that $\chi$ and ${}^g\chi$ are $\sigma$-fixed using the argument of \cite[\S10]{schaeffer-fry-taylor:2017:on-self-normalising-sylow-2-subgroups}. The case of $\SL_2(q)$ is easily checked using the character table given in \cite{bonnafe:2011:SL2}.
\end{proof}

\begin{prop}
Assume $\bG = \SL_n(\mathbb{K})$ and $p \neq 2$. If $S = G/Z(G)$ is simple and $S \leqslant A \leqslant \Aut(S)$ is an almost simple group with $A/S$ a $2$-group, then $A$ satisfies \cref{conj:main}.
\end{prop}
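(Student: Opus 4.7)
I would approach this by reducing to characters of $S$ via \cref{lem:2powerrestriction,lem:Brauer} and then applying \cref{lem:type-A-char-inv}. Set $Q \in \Syl_2(A)$ and $P = S \cap Q \in \Syl_2(S)$, so $A = SQ$. The conjunction of \cref{lem:Brauer} and \cref{lem:2powerrestriction} makes \cref{conj:main} for $A$ equivalent to the biimplication: $N_A(Q)$ has a normal $2$-complement if and only if every $Q$-invariant character in $\Irr_{2'}(B_0(S))$ is $\sigma$-fixed. I plan to verify both sides.

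For the character side, any $\psi \in \Irr_{2'}(B_0(S))$ inflates to a character $\hat\psi \in \Irr_{2'}(G)$ with $Z := Z(G) \leqslant \Ker(\hat\psi)$. Using \cref{lem:principal-block-G/N} with the Sylow $2$-subgroup $Z_2 \leqslant Z$ (whose centraliser is all of $G$), together with the standard fact that the principal $2$-block is preserved under quotients by central $2'$-subgroups, one obtains $\hat\psi \in \Irr_{2'}(B_0(G))$. By the classical description of the principal $\ell$-block of a finite reductive group in terms of Lusztig series, $\hat\psi \in \mathcal{E}(G,[s])$ for some semisimple $2$-element $s \in G^{\star}$, so \cref{lem:type-A-char-inv} yields that $\hat\psi$, and hence $\psi$, is $\sigma$-fixed except possibly when $G = \SL_2(q)$ with $q \equiv \pm 3 \pmod 8$.

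In every case outside this $\SL_2$ exception the right-hand side of the biimplication holds automatically, and the left-hand side reduces to checking $N_A(Q)$ has a normal $2$-complement. For $S$ of type $\A_{n-1}^\pm(q)$ with $q$ odd and either $n \geqslant 3$, or $n = 2$ with $q \equiv \pm 1 \pmod 8$, Kondratiev's classification \cite{kondratiev:2004:normalizers-of-sylow-2-subgroups-in-finite-simple-groups} forces $N_S(P) = P$; then \cref{lem:norm-2-comp-exist} gives $N_A(Q) = Q$, which trivially has a normal $2$-complement.

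The $\SL_2(q)$ exception with $q \equiv \pm 3 \pmod 8$ must be handled directly. Here $N_S(P) \cong \A_4$ has no normal $2$-complement, and \cref{lem:norm-2-comp-no-exist} propagates this to $N_A(Q)$ once one inspects the short list of admissible $A$ to exclude the $N_A(Q) = Q$ alternative. To produce an odd-degree character in $B_0(A)$ not fixed by $\sigma$, I would take the two characters of $G = \SL_2(q)$ of degree $(q \pm 1)/2$ that fail to extend to $\widetilde{G}$: by the final clause of \cref{lem:type-A-char-inv} these form a $\sigma$-orbit of length two; they descend to $S$ and lie in $B_0(S)$ by inspection of the well-known $2$-decomposition matrix of $\SL_2(q)$; and a Clifford-theoretic extension yields the desired character of $A$. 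The principal obstacle is organising this last step uniformly across the admissible $A$, which is essentially the analysis already carried out for SN2S-Goodness of $\A_1(q)$ in \cite{schaefferfry:2016:odd-degree-characters-and-self-normalizing-sylow-subgroups}.
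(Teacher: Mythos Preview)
Your overall strategy matches the paper's, and the generic case (via \cref{lem:type-A-char-inv}, Kondrat'ev, and \cref{rem:Pnormal2complement}) is essentially identical. One minor point: Kondrat'ev gives only that $N_S(P)$ has a normal $2$-complement for $\A_{n-1}^{\pm}(q)$ with $q$ odd, not that $N_S(P)=P$; this is still enough for \cref{lem:norm-2-comp-exist}, so your argument survives.

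The genuine gap is in the $\SL_2(q)$, $q\equiv\pm3\pmod 8$ case. You assert that for every admissible $A$ one can ``exclude the $N_A(Q)=Q$ alternative'' and then uniformly exhibit a non-$\sigma$-fixed odd-degree character of $A$. This is false. Since an odd square is $1\pmod 8$, the hypothesis $q\equiv\pm3\pmod 8$ forces $q=p^m$ with $m$ odd, so the only admissible $A$ besides $S$ is $A=\PGL_2(q)$. For this $A$ the Sylow $2$-subgroup is self-normalising (Carter--Fong), so \cref{lem:norm-2-comp-no-exist} lands in the $N_A(Q)=Q$ branch, not the other one. Correspondingly, the two characters $R'_{\pm}(\theta_0)$ of $S$ you want to use are interchanged by the diagonal automorphism and hence are not $A$-invariant; no Clifford-theoretic extension to $A$ exists, and in fact $\Irr_{2'}(\PGL_2(q))$ consists only of the trivial and Steinberg characters, both $\sigma$-fixed. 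Thus the two possibilities $A=S$ and $A=\PGL_2(q)$ sit on opposite sides of the biimplication in \cref{conj:main}, and must be treated separately, exactly as the paper does: for $A=S$ exhibit $R'_{\pm}(\theta_0)\in\Irr_{2'}(B_0(S))$ not $\sigma$-fixed; for $A=\PGL_2(q)$ verify directly that every odd-degree character is $\sigma$-fixed.
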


\begin{proof}
Let us start with the case where $G = \SL_2(q)$ and $q \equiv \pm3 \pmod{8}$. If $P \leqslant S$ is a Sylow $2$-subgroup then $N_S(P) \cong (C_2 \times C_2) \rtimes  C_3$ with $C_3$ cyclically permuting the elements of order $2$. Thus $N_S(P)$ does not have a normal $2$-complement. There are precisely $2$ odd degree characters of $G$ that do not extend to $\widetilde{G}$ and these are labelled $R_{\sigma}'(\theta_0)$ in \cite[Table 5.4]{bonnafe:2011:SL2}; they are both in the principal block of $G$ by \cite[7.1.1(e)]{bonnafe:2011:SL2}. By \cite[Exercise 4.1(c)]{bonnafe:2011:SL2} we have $R_{\sigma}'(\theta_0)$ has $Z(G) \cong C_2$ in its kernel so $R_{\sigma}'(\theta_0) \in \Irr_{2'}(B_0(S))$ by \cref{lem:principal-block-G/N}. As these are not $\sigma$-fixed, c.f., \cref{lem:type-A-char-inv}, we have \cref{conj:main} holds for $S$.

Assume now that $A \neq S$. Recall that if $q = p^m$ then $\Aut(S) \cong \PGL_2(q)\rtimes C_m$, where the cyclic group acts via field automorphisms. As $p$ is odd and $q \equiv \pm3 \pmod{8}$ we must have $m$ is odd so $A \cong \PGL_2(q)$. We thus have $A$ has a self-normalising Sylow $2$-subgroup, see \cite[Lemma 3]{carter-fong:1964:the-Sylow-2-subgroups}. It follows easily from the character table of $\GL_2(q)$, see \cite[\S15.9]{digne-michel:1991:representations-of-finite-groups-of-lie-type}, that the only odd degree characters of $\PGL_2(q)$ are the trivial and Steinberg characters, which are $\sigma$-fixed; thus \cref{conj:main} holds for $A$.

We now consider the case where either $G \neq \SL_2(q)$ or $q \equiv \pm1 \pmod{8}$. As $\bG$ is of type $\A$ and $q$ is odd we have by \cite[21.14]{cabanes-enguehard:2004:representation-theory-of-finite-reductive-groups} that
%%%%
\begin{equation}\label{eq:typ-A-principal-2-block}
\Irr(B_0(G)) = \bigcup_{s \in G^{\star}}\mathcal{E}(G,[s])
\end{equation}
%%%%
where the sum is taken over all $2$-elements in the dual. \Cref{lem:type-A-char-inv} thus implies that every member of $ \Irr_{2'}(B_0(G))$ is fixed by $\sigma$ and by \cref{lem:principal-block-G/N}, the same is true of $\Irr_{2'}(B_0(S))$. According to \cite[\S1, Corollary]{kondratiev:2004:normalizers-of-sylow-2-subgroups-in-finite-simple-groups} the normaliser $N_S(P)$ has a normal $2$-complement, so $A$ satisfies \cref{conj:main} by \cref{rem:Pnormal2complement}.
\end{proof}

\section{Type \texorpdfstring{$\E_6$}{E6}}\label{sec:typeE}

\begin{assumption}
In this section we assume $\bG$ is simply connected of type $\E_6$ and $p \neq 2$.
\end{assumption}

\begin{prop}
Assume $\bG$ is simply connected of type $\E_6$ and $p \neq 2$. If $S = G/Z(G)$ is simple and $S \leqslant A \leqslant \Aut(S)$ is an almost simple group with $A/S$ a $2$-group, then $A$ satisfies \cref{conj:main}.
\end{prop}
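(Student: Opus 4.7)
The plan is to mimic the strategy for type $\A$. Set $G = \bG^F$ and $\widetilde{G} = \widetilde{\bG}^F$ for a regular embedding $\iota \colon \bG \hookrightarrow \widetilde{\bG}$. First, I would invoke \cite[21.14]{cabanes-enguehard:2004:representation-theory-of-finite-reductive-groups} to obtain, since $q$ is odd, that
\begin{equation*}
\Irr(B_0(G)) = \bigcup_{s} \mathcal{E}(G, [s])
\end{equation*}
with $s$ running over the $2$-elements of $G^\star$, and note that each such series is $\sigma$-stable by \cite[3.4]{schaeffer-fry-taylor:2017:on-self-normalising-sylow-2-subgroups}. The bulk of the work then lies in proving the analogue of \cref{lem:type-A-char-inv}: every odd-degree $\chi \in \mathcal{E}(G, [s])$ with $s$ a $2$-element is $\sigma$-fixed.

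Condition $(\star)$ of \cref{prop:extend-fixed} is known to hold here by \cite[6.3]{digne-michel:1990:lusztigs-parametrization}, so if $\chi$ extends to $\widetilde{G}$ then $\chi^\sigma = \chi$. The $\widetilde{G}$-orbit of $\chi$ in $\mathcal{E}(G,[s])$ has size $|A_{\bG^\star}(s)^{F^\star}|$, where $A_{\bG^\star}(s) = C_{\bG^\star}(s)/C_{\bG^\star}(s)^\circ$ is a subquotient of $Z(\bG) \cong \mu_3$, forcing the orbit to have size $1$ or $3$. When the orbit is trivial, $\chi$ is $\widetilde{G}$-invariant, and since $\widetilde{G}/G$ is cyclic (as $\widetilde{\bG}/\bG$ is a $1$-dimensional torus), $\chi$ extends to $\widetilde{G}$ and we are done. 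When the orbit has size $3$, I would analyse explicitly the short list of $2$-element classes $[s] \subseteq G^\star$ for which $A_{\bG^\star}(s)^{F^\star} \neq 1$ — these correspond to $s$ whose preimage in the simply connected dual group has a non-trivial $3$-part — and show, via Jordan decomposition together with the technique of \cite[\S10]{schaeffer-fry-taylor:2017:on-self-normalising-sylow-2-subgroups}, that the odd-degree characters in the corresponding Lusztig series are nonetheless fixed by $\sigma$.

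Once every $\chi \in \Irr_{2'}(B_0(G))$ is $\sigma$-fixed, \cref{lem:principal-block-G/N} transfers the property to $\Irr_{2'}(B_0(S))$ for $S = G/Z(G)$. Finally, \cite[\S1, Corollary]{kondratiev:2004:normalizers-of-sylow-2-subgroups-in-finite-simple-groups} gives that $N_S(P)$ has a normal $2$-complement, and \cref{rem:Pnormal2complement} then delivers \cref{conj:main} for $A$. The main obstacle is the orbit-size-$3$ case: there \cref{prop:extend-fixed} only shows that $\chi^\sigma$ lies in the $\widetilde{G}$-orbit of $\chi$, and pinning down that $\sigma$ fixes each member of such an orbit will require a more detailed analysis of the Jordan-decomposition data than in the type $\A$ setting.
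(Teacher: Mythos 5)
Your plan hits the right framework but stalls at exactly the point where the paper's proof supplies the missing ingredient. You try to show that \emph{every} character in $\Irr_{2'}(B_0(G))$ is $\sigma$-fixed by controlling the $\widetilde{G}$-orbit of $\chi$ in its Lusztig series, and you correctly identify that a nontrivial orbit has size $3$ and that \cref{prop:extend-fixed} alone only tells you $\chi^\sigma$ lies in this orbit. But you never close that case — you defer it to ``a more detailed analysis of the Jordan-decomposition data'' without executing it, so there is a genuine gap. You also state that condition $(\star)$ of \cref{prop:extend-fixed} holds ``by \cite[6.3]{digne-michel:1990:lusztigs-parametrization}'' without qualification; in type $\E_6$ the Digne--Michel result has exceptions related to unipotent characters, and the paper is careful to invoke it only after restricting to non-unipotent $\chi$.

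The paper avoids the orbit-size-$3$ problem entirely, and this is the key point you are missing. Rather than working with $\Irr_{2'}(B_0(G))$ (which is more than is needed and forces the orbit analysis), the paper works directly with $\chi \in \Irr_{2'}(B_0(S))$ and invokes Navarro--Tiep \cite[Lemma 4.13]{navarro-tiep:2015:irreducible-representations-of-odd-degree}: every non-unipotent odd-degree irreducible character of $S$ extends to $\widetilde{G}/Z(\widetilde{G}) \cong \widetilde{\bG}_{\ad}^F$. Inflating through the adjoint quotient, $\chi$ lifts to $G$ and that extension lifts to $\widetilde{G}$, so $\chi$ \emph{does} extend to $\widetilde{G}$ — i.e., for the characters that actually matter, the $\widetilde{G}$-orbit is automatically trivial. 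From there, Brou\'e--Michel (\cite[9.12]{cabanes-enguehard:2004:representation-theory-of-finite-reductive-groups}) places $\chi$ in a $2$-element Lusztig series, \cite[3.4]{schaeffer-fry-taylor:2017:on-self-normalising-sylow-2-subgroups} gives $\sigma$-stability of that series, and \cref{prop:extend-fixed} applies because $\chi$ is not unipotent and extends to $\widetilde{G}$. Your framing as an analogue of type $\A$ is reasonable, but the type $\A$ argument fell back on an explicit \S10 analysis when characters failed to extend; in type $\E_6$ Navarro--Tiep is the tool that eliminates that failure mode, and without it your proof is incomplete.
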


\begin{proof}
Write $A=SQ$ where $Q\in\syl_2(A)$.  First, note that if $P\in\syl_2(S)$, then $N_S(P)$ has a normal $2$-complement, see \cite[\S1, Corollary]{kondratiev:2004:normalizers-of-sylow-2-subgroups-in-finite-simple-groups}. Thus it suffices by \cref{rem:Pnormal2complement} to show that every member of $\Irr_{2'}(B_0(S))$ is $\sigma$-fixed.

Consider an adjoint quotient $\widetilde{\bG} \to \widetilde{\bG}_{\ad}$ of $\widetilde{\bG}$. The kernel of this map is the (connected) centre of $\widetilde{\bG}$, so by the Lang--Steinberg theorem we have $\widetilde{G}/Z(\widetilde{G}) \cong \widetilde{\bG}_{\ad}^F$. Now let $\chi\in\irr_{2'}(B_0(S))$ be non-unipotent. By a result of Navarro--Tiep, together with the proceeding remark, $\chi$ extends to a character $\widetilde{\chi} \in \Irr(\widetilde{G}/Z(\widetilde{G}))$, see \cite[Lemma 4.13]{navarro-tiep:2015:irreducible-representations-of-odd-degree}. By inflation, we may view $\chi$ as a character of $G$ and $\widetilde{\chi}$ as a character of $\widetilde{G}$ extending $\chi$. By \cref{lem:principal-block-G/N}, we have $\chi \in \Irr(B_0(G))$, so a result of Brou\'e--Michel shows that $\chi\in\mathcal{E}(G,[s])$ with $s\in G^{\star}$ a $2$-element, see \cite[9.12]{cabanes-enguehard:2004:representation-theory-of-finite-reductive-groups}. By \cite[3.4]{schaeffer-fry-taylor:2017:on-self-normalising-sylow-2-subgroups} the series $\mathcal{E}(G,[s])$ is $\sigma$-stable and condition ($\star$) of \cref{prop:extend-fixed} is satisfied by \cite{digne-michel:1990:lusztigs-parametrization} because $\chi$ is not unipotent. Hence, as $\chi$ extends to $\widetilde{G}$, we have $\chi$ is $\sigma$-fixed by \cref{prop:extend-fixed}.
\end{proof}

\setstretch{0.96}
\renewcommand*{\bibfont}{\small}
\printbibliography
\end{document}